\newtheorem{theorem}{Theorem}[section]
\newtheorem{lemma}[theorem]{Lemma}
\newtheorem{rem}[theorem]{Remark}
\newtheorem{cor}[theorem]{Corollary}
\DeclareMathOperator{\im}{im}
\DeclareMathOperator{\sfl}{sf}
\DeclareMathOperator{\sgn}{sgn}
\DeclareMathOperator{\gra}{graph}
\DeclareMathOperator{\codim}{codim}
\title{On a Comparison Principle and the Uniqueness of Spectral Flow}
\author{Maciej Starostka and Nils Waterstraat}
\begin{document}
\date{}
\maketitle

\footnotetext[1]{{\bf 2010 Mathematics Subject Classification: Primary 58J30; Secondary 37J45, 53D12 }}
\footnotetext[2]{The authors were partly supported by Grant Beethoven2 of the National Science Centre, Poland, no. 2016/23/G/ST1/04081 and DFG Grant AB 360/3-1}

\begin{abstract}
\noindent
The spectral flow is a well-known quantity in spectral theory that measures the variation of spectra about $0$ along paths of selfadjoint Fredholm operators. The aim of this work is twofold. Firstly, we consider homotopy invariance properties of the spectral flow and establish a simple formula which comprises its classical homotopy invariance and yields a comparison theorem for the spectral flow under compact perturbations. We apply our result to the existence of non-trivial solutions of boundary value problems of Hamiltonian systems. Secondly, the spectral flow was axiomatically characterised by Lesch, and by Ciriza, Fitzpatrick and Pejsachowicz under the assumption that the endpoints of the paths of selfadjoint Fredholm operators are invertible. We propose a different approach to the uniqueness of spectral flow which lifts this additional assumption. As application of the latter result, we discuss the relation between the spectral flow and the Maslov index in symplectic Hilbert spaces.    
\end{abstract}

\section{Introduction}
The spectral flow is a homotopy invariant for paths of selfadjoint Fredholm operators that was invented by Atiyah, Patodi and Singer in their famous study of spectral asymmetry and index theory in \cite{APS}. Selfadjoint Fredholm operators are either invertible or $0$ is an isolated eigenvalue of finite multiplicity. Roughly speaking, if $\mathcal{A}=\{\mathcal{A}_\lambda\}_{\lambda\in[0,1]}$ is a path of selfadjoint Fredholm operators, then the spectral flow of $\mathcal{A}$ is the net number of eigenvalues of $\mathcal{A}_0$ that become positive whilst the parameter $\lambda$ travels along the unit interval. In this paper, we are dealing with paths of (generally) unbounded operators which are continuous with respect to the gap-metric. The spectral flow was introduced in this setting by Boo{\ss}-Bavnbek, Lesch and Phillips in \cite{UnbSpecFlow}. Every norm-continuous path of bounded operators is continuous with respect to the gap-metric, and there are various works which had considered this case previously (see, e.g., \cite{SFLPejsachowicz}, \cite{Phillips}).\\
As indicated by the title, this paper falls naturally into two parts. The homotopy invariance of the spectral flow has been stated in various forms. For example, a common property of the spectral flow is that it is invariant under homotopies of paths having invertible endpoints (see e.g. \cite{SFLPejsachowicz}). A more general observation is the invariance under homotopies where the dimensions of the kernels of the endpoints are constant (see e.g. \cite{UnbSpecFlow}). We obtain a general formula for the change of the spectral flow under gap-continuous homotopies of selfadjoint Fredholm operators which comprises all previously known results. As a corollary, we show that the spectral flow is invariant under free homotopies of closed paths which was noted for paths of bounded operators in \cite{SFLPejsachowicz}. After this degression about homotopies, we focus on our first aim of this paper and consider relatively compact perturbations of paths of selfadjoint Fredholm operators. Our main theorem in this part is a \textit{comparison principle} for the spectral flow of paths of relatively compact perturbations of a given path of gap-continuous selfadjoint Fredholm operators. We pay particular attention to the gap-continuity of the perturbed path, which requires a generalisation of a well-known theorem from \cite{Kato}. The comparison principle finally follows from our previous investigation of the homotopy invariance and a method for computing the spectral flow from \cite{WaterstraatHomoclinics} that is based on previous work by Robbin and Salamon from \cite{Robbin}. As an application of the comparison principle, we consider paths of boundary value problems for linear Hamiltonian systems and obtain an estimate for the number of parameter values of the path where the Hamiltonian systems have non-trivial solutions.\\
The spectral flow is uniquely characterised by some of its properties, which is known as the \textit{uniqueness of spectral flow}. For paths of bounded operators, this was independently observed by Ciriza, Fitzpatrick and Pejsachowicz in \cite{JacoboUniqueness} and by Lesch in \cite{Lesch}. Lesch also showed that the same result is true in the case of paths of unbounded operators that are continuous in the gap-topology. However, all these theorems assume that the paths of operators have invertible endpoints. The second aim of this paper is to establish a \textit{uniqueness of spectral flow theorem} which lifts the assumption on the invertibility of the endpoints. As an application of the latter result we consider symplectic Hilbert spaces as in \cite{Furutani} and recall that the graphs of gap-continuous paths of selfadjoint Fredholm operators yield paths of Lagrangian subspaces in this setting. We show that the spectral flow is the Maslov index of the path of graphs as a rather simple consequence of our uniqueness theorem. This fact might be considered as folklore, but we are not aware of a proof in the literature.\\
Our paper is structured as follows. We recall the definition of the spectral flow in the next section. In Section 3, we firstly discuss the homotopy invariance. Afterwards we show an estimate that allows to prove the continuity of paths of unbounded operators in broad generality. The following main theorem of this part of the paper is a comparison result for the spectral flow that we then apply to paths of boundary value problems of Hamiltonian systems. The fourth section of our paper is devoted to the uniqueness of the spectral flow. We firstly recall Lesch's Uniqueness Theorem from \cite{Lesch}, where we in particular introduce the axioms which uniquely characterise the spectral flow. Afterwards we state and prove our main theorem, which is a uniqueness theorem for the spectral flow that, in contrast to \cite{Lesch}, does not require the invertibility of the endpoints of the paths. Finally, we consider symplectic Hilbert spaces and show that the spectral flow of a path can be obtained as Maslov index of its associated path of graphs.


\section{The Spectral Flow}

\subsection{Definition and First Properties}\label{section-sfldef}
The aim of this section is to recall the construction of the spectral flow and some of its basic properties, where we follow \cite{Phillips} and \cite{UnbSpecFlow}.\\
Let $H$ be a real or complex separable Hilbert space. We denote by $\mathcal{C}(H)$ the set of all densely defined closed operators on $H$. The \textit{gap-metric} on $\mathcal{C}(H)$ is defined by

\begin{align}\label{gap}
d_G(T,S)=\|P_T-P_S\|,\quad T,S\in\mathcal{C}(H),
\end{align}
where $P_T, P_S$ are the orthogonal projections onto the graphs of $T$ and $S$. As every selfadjoint operator is closed, these operators form a subset of $\mathcal{C}(H)$ which we denote by $\mathcal{C}^\textup{sa}(H)$. Further, we let $\mathcal{CF}(H)\subset\mathcal{C}(H)$ be the set of all Fredholm operators and we set $\mathcal{CF}^\textup{sa}(H)=\mathcal{C}^\textup{sa}(H)\cap\mathcal{CF}(H)$ which is the set of all selfadjoint Fredholm operators. Let us mention for later reference that, when restricted to the bounded operators $\mathcal{B}(H)\subset\mathcal{C}(H)$, the gap-metric induces the same topology as the metric induced by the operator norm (see \cite[Rem. IV.2.16]{Kato}).\\ 
The spectrum of $T\in\mathcal{CF}^\textup{sa}(H)$ is the (generally non-disjoint) union of the point spectrum and the essential spectrum. Moreover, $0$ is either in the resolvent set or it is an isolated eigenvalue of finite multiplicity (see, e.g., \cite[Lemma 13]{Fredholm}). We denote for $a,b\notin\sigma(T)$ by $\chi_{[a,b]}(T)$ the spectral projection of $T$ with respect to the interval $[a,b]$. Note that if $\sigma_{ess}(T)\cap{[a,b]}=\emptyset$, then $\chi_{[a,b]}(T)$ is the orthogonal projection onto the direct sum of the eigenspaces for eigenvalues in $[a,b]$. The proof of the following lemma can be found in \cite[Lemma 2.9]{UnbSpecFlow}.

\begin{lemma}\label{lemma-sflconstruction}
Let $T_0\in\mathcal{CF}^\textup{sa}(H)$ and $a>0$ such that $\pm a\notin\sigma(T_0)$ and $[-a,a]\cap\sigma_{ess}(T_0)=\emptyset$. Then there is an open neighbourhood $N_{T_0,a}$ of $T_0$ in $\mathcal{CF}^\textup{sa}(H)$ such that $\pm a\notin\sigma(T)$, $[-a,a]\cap\sigma_{ess}(T)=\emptyset$ for all $T\in N_{T_0,a}$ and

\[N_{T_0,a}\ni T\mapsto\chi_{[-a,a]}(T)\in\mathcal{B}(H)\]
is continuous.
\end{lemma}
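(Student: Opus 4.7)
The plan is to realise $\chi_{[-a,a]}(T)$ as a Riesz contour integral surrounding $[-a,a]$ in $\mathbb{C}$ and then derive the required norm-continuity from a uniform gap-continuity estimate for the resolvent along the contour. First I would exploit that $\rho(T_0)$ is open in $\mathbb{C}$ and $\sigma_{ess}(T_0)$ is closed and disjoint from $[-a,a]$ to pick $\delta>0$ such that $[-a-\delta,-a]\cup[a,a+\delta]\subset\rho(T_0)$ and $\sigma_{ess}(T_0)\cap[-a-\delta,a+\delta]=\emptyset$. Let $\Gamma$ denote the positively oriented boundary of the rectangle $[-a-\delta,a+\delta]\times[-\delta,\delta]\subset\mathbb{C}$; selfadjointness of $T_0$ gives $\Gamma\subset\rho(T_0)$ and $\sigma(T_0)\cap\mathrm{int}(\Gamma)=\sigma(T_0)\cap[-a,a]$.

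\textbf{Main step.} The heart of the argument is a uniform resolvent estimate: there is a gap-neighbourhood $N_{T_0,a}$ of $T_0$ in $\mathcal{CF}^\textup{sa}(H)$ such that the compact set $K:=\Gamma\cup[-a-\delta,-a]\cup[a,a+\delta]\subset\rho(T_0)$ lies in $\rho(T)$ for every $T\in N_{T_0,a}$, and
\[
\sup_{z\in\Gamma}\bigl\|(T-z)^{-1}-(T_0-z)^{-1}\bigr\|\longrightarrow 0\quad\text{as }T\to T_0\text{ in the gap metric.}
\]
This is the main obstacle: it requires that, for every $z_0\in\rho(T_0)$, the map $T\mapsto(T-z_0)^{-1}\in\mathcal{B}(H)$ is norm-continuous at $T_0$ in the gap topology, and that this continuity is locally uniform over compact subsets of $\rho(T_0)$. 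To see this I would use that gap-convergence of selfadjoint operators is equivalent to norm-convergence of graph projections, which delivers norm-convergence of the bounded transform $(T+iI)^{-1}$; the resolvent identity then yields norm-continuity of $(T-z)^{-1}$ for any fixed $z\in\rho(T_0)$, and a standard compactness argument on $K$ (the selfadjoint counterpart of the resolvent-continuity theorems in Chapter IV of \cite{Kato}) upgrades this pointwise statement to the uniform estimate above.

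\textbf{Conclusion.} Granted the uniform estimate, define
\[
P(T):=\frac{1}{2\pi i}\oint_\Gamma (z-T)^{-1}\,dz,\qquad T\in N_{T_0,a},
\]
so that $P\colon N_{T_0,a}\to\mathcal{B}(H)$ is norm-continuous. For $T\in N_{T_0,a}$, selfadjointness confines $\sigma(T)\cap\mathrm{int}(\Gamma)$ to the real axis, and the inclusion $[-a-\delta,-a]\cup[a,a+\delta]\subset\rho(T)$ reduces it further to $\sigma(T)\cap[-a,a]$; the Riesz functional calculus therefore identifies $P(T)$ with $\chi_{[-a,a]}(T)$. The first required condition, $\pm a\in\rho(T)$, is immediate from $\pm a\in K\subset\rho(T)$. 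The essential-spectrum statement follows because $P(T_0)=\chi_{[-a,a]}(T_0)$ has finite rank (as $\sigma_{ess}(T_0)\cap[-a,a]=\emptyset$) and orthogonal projections within distance $<1$ of a finite-rank one in operator norm share its rank, so after possibly shrinking $N_{T_0,a}$ each $\chi_{[-a,a]}(T)$ is finite-rank, which rules out essential spectrum in $[-a,a]$.
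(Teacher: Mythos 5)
The paper does not prove this lemma itself but cites \cite[Lemma 2.9]{UnbSpecFlow}, and your proposal follows essentially the same route taken there: realise $\chi_{[-a,a]}(T)$ as a Riesz contour integral of the resolvent, deduce its norm-continuity from the gap-to-norm-resolvent continuity results in Chapter IV of \cite{Kato} (made locally uniform over the compact contour by a covering argument), and use the comparison of nearby finite-rank orthogonal projections to propagate the hypotheses on $\sigma_{ess}$ and $\pm a$. The argument is correct; the one slightly glossed step is passing from norm-continuity of $(T+iI)^{-1}$ to that of $(T-z)^{-1}$ for the real points $z\in[-a-\delta,-a]\cup[a,a+\delta]$, which requires a Neumann-series propagation along a chain in $\rho(T_0)$ rather than a single application of the resolvent identity.
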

\noindent
The construction of the spectral flow of a path $\mathcal{A}=\{\mathcal{A}_\lambda\}_{\lambda\in I}$ is now as follows, where we denote by $I$ the unit interval. For every $\lambda\in I$ there is an open neighbourhood $N_{\lambda,a}\subset\mathcal{CF}^\textup{sa}(H)$ of $\mathcal{A}_\lambda$ as in the previous lemma. The preimages of these neighbourhoods define an open covering of the compact interval $I$. Consequently, there is a partition of the interval $0=\lambda_0<\lambda_1<\ldots<\lambda_N=1$ and numbers $a_i>0$ such that 

\[\pm a_i\notin\sigma(\mathcal{A}_\lambda),\quad [-a_i,a_i]\cap\sigma_{ess}(\mathcal{A}_\lambda)=\emptyset,\qquad \lambda\in[\lambda_{i-1},\lambda_i]\] 
as well as

\[[\lambda_{i-1},\lambda_i]\ni\lambda\mapsto\chi_{[-a_i,a_i]}(\mathcal{A}_\lambda)\in\mathcal{B}(H)\]
is continuous, $i=1,\ldots,N$. The \textit{Spectral Flow} of the path $\mathcal{A}$ is the integer

\begin{align}\label{def-sfl}
\sfl(\mathcal{A})=\sum^N_{i=1}{\left(\dim(\im(\chi_{[0,a_i]}(\mathcal{A}_{\lambda_i}))-\dim(\im(\chi_{[0,a_i]}(\mathcal{A}_{\lambda_{i-1}}))\right)}.
\end{align}
A careful analysis of the continuity of the spectral projections in Lemma \ref{lemma-sflconstruction} shows that this definition neither depends on the partition $0=\lambda_0<\ldots<\lambda_N=1$ nor on the numbers $a_1,\ldots,a_N$, and so it is indeed a well-defined index of the path $\mathcal{A}$ (see \cite{Phillips}, \cite{UnbSpecFlow}).\\
In what follows we denote by $\mathcal{A}^1\ast\mathcal{A}^2$ the concatenation of two paths $\mathcal{A}^1, \mathcal{A}^2$ in $\mathcal{CF}^\textup{sa}(H)$ which is defined if $\mathcal{A}^1_1=\mathcal{A}^2_0$, i.e. if the initial point of $\mathcal{A}^2$ is the endpoint of $\mathcal{A}^1$. We note the following property of the spectral flow, which is an immediate consequence of its definition.

\begin{enumerate}
 \item[(C)] If $\mathcal{A}^1$ and $\mathcal{A}^2$ are two paths in $\mathcal{CF}^\textup{sa}(H)$ such that $\mathcal{A}^1_1=\mathcal{A}^2_0$, then 
 \[\sfl(\mathcal{A}^1\ast\mathcal{A}^2)=\sfl(\mathcal{A}^1)+\sfl(\mathcal{A}^2).\]
\end{enumerate}
A further property of the spectral flow that has often been used in the literature is that it vanishes for paths of invertible operators. This is clear from its interpretation and also not difficult to prove by using the continuity of the spectral projections in Lemma \ref{lemma-sflconstruction}. Here we show a slightly more general version of this assertion which will be important in later sections.

\begin{lemma}\label{lemma-kernelconstant}
If $\mathcal{A}:I\rightarrow\mathcal{CF}^\textup{sa}(H)$ is such that $\dim\ker(\mathcal{A}_\lambda)$ is constant for all $\lambda\in I$, then

\[\sfl(\mathcal{A})=0.\]
\end{lemma}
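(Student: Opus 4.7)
The plan is to exploit the freedom in choosing the partition and the widths $a_i$ in the definition \eqref{def-sfl}. Concretely, I will produce a partition on which the widths $a_i$ are so small that, for every $\lambda\in[\lambda_{i-1},\lambda_i]$, the spectral projection $\chi_{[0,a_i]}(\mathcal{A}_\lambda)$ coincides with the orthogonal projection onto $\ker(\mathcal{A}_\lambda)$. Writing $k$ for the constant value of $\dim\ker(\mathcal{A}_\lambda)$, every summand in \eqref{def-sfl} then reads $k-k=0$, so $\sfl(\mathcal{A})=0$.

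To build such a partition, fix $\lambda_0\in I$. Since $\mathcal{A}_{\lambda_0}\in\mathcal{CF}^\textup{sa}(H)$, the point $0$ is either in the resolvent set or is an isolated eigenvalue of finite multiplicity, so I may choose $a_{\lambda_0}>0$ with $\pm a_{\lambda_0}\notin\sigma(\mathcal{A}_{\lambda_0})$, $[-a_{\lambda_0},a_{\lambda_0}]\cap\sigma_{ess}(\mathcal{A}_{\lambda_0})=\emptyset$ and $\sigma(\mathcal{A}_{\lambda_0})\cap[-a_{\lambda_0},a_{\lambda_0}]\subseteq\{0\}$. Lemma \ref{lemma-sflconstruction} then supplies an open neighbourhood $U_{\lambda_0}\subset I$ of $\lambda_0$ on which $\lambda\mapsto\chi_{[-a_{\lambda_0},a_{\lambda_0}]}(\mathcal{A}_\lambda)$ is norm-continuous, so its rank is locally constant, hence constantly equal to $k$ on $U_{\lambda_0}$. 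On the other hand, since $0\in[-a_{\lambda_0},a_{\lambda_0}]$ we have the inclusion $\ker(\mathcal{A}_\lambda)\subseteq\im\chi_{[-a_{\lambda_0},a_{\lambda_0}]}(\mathcal{A}_\lambda)$ together with $\dim\ker(\mathcal{A}_\lambda)=k$ by hypothesis, so equality of dimensions forces $\im\chi_{[-a_{\lambda_0},a_{\lambda_0}]}(\mathcal{A}_\lambda)=\ker(\mathcal{A}_\lambda)$ for every $\lambda\in U_{\lambda_0}$. Therefore $\chi_{[0,a_{\lambda_0}]}(\mathcal{A}_\lambda)$ is the orthogonal projection onto $\ker(\mathcal{A}_\lambda)$, of constant rank $k$. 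Covering the compact interval $I$ by sets $U_{\lambda_0}$, passing to a Lebesgue partition $0=\lambda_0<\ldots<\lambda_N=1$ adapted to a finite subcover, and taking $a_i$ to be the corresponding $a_{\tilde\lambda}$, the telescoping sum in \eqref{def-sfl} collapses to $0$.

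The conceptual heart of the argument, and the step I expect to require the most care, is the equality $\im\chi_{[-a_{\lambda_0},a_{\lambda_0}]}(\mathcal{A}_\lambda)=\ker(\mathcal{A}_\lambda)$. A priori, the continuity of the spectral projection only controls the \emph{total} number of eigenvalues of $\mathcal{A}_\lambda$ in $[-a_{\lambda_0},a_{\lambda_0}]$ (counted with multiplicity); some of them could in principle wander off to nonzero values in $[-a_{\lambda_0},0)\cup(0,a_{\lambda_0}]$ as $\lambda$ varies, as long as others simultaneously migrate onto $\{0\}$ so as to preserve the total count and the kernel dimension separately. The hypothesis of constant kernel dimension is precisely what rules this scenario out and pins the entire spectrum of $\mathcal{A}_\lambda$ in $[-a_{\lambda_0},a_{\lambda_0}]$ at the single point $0$. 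Once this is clear, the remainder of the proof is a routine bookkeeping exercise with the definition \eqref{def-sfl}.
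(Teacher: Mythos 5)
Your proof is correct and follows essentially the same route as the paper's: you use the continuity of the spectral projections from Lemma~\ref{lemma-sflconstruction} together with the rank-stability of nearby projections to force $\im\chi_{[-a,a]}(\mathcal{A}_\lambda)=\ker(\mathcal{A}_\lambda)$ on a small interval, so that each summand in \eqref{def-sfl} vanishes. The only cosmetic difference is that the paper packages the local computation as $\sfl(\mathcal{A}\mid_{[\lambda_0-\delta,\lambda_0+\delta]})=0$ and then invokes the concatenation property (C), whereas you assemble a single adapted partition directly; the underlying argument is identical.
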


\begin{proof}
Let $\lambda_0\in I$. As $0$ is an isolated eigenvalue of $\mathcal{A}_{\lambda_0}$, there is $\varepsilon>0$ such that the rank of the spectral projection $\chi_{[-\varepsilon, \varepsilon]}(\mathcal{A}_{\lambda_0})$ is the dimension of the kernel of $\mathcal{A}_{\lambda_0}$. As projections of norm-distance less than one have equal ranks (see \cite[Lem. II.4.3]{Gohberg}), it follows from Lemma \ref{lemma-sflconstruction} that there is $\delta>0$ such that 

\[\dim\im(\chi_{[-\varepsilon, \varepsilon]}(\mathcal{A}_{\lambda}))=\dim\im(\chi_{[-\varepsilon, \varepsilon]}(\mathcal{A}_{\lambda_0}))=\dim\ker(\mathcal{A}_{\lambda_0}),\quad \lambda_0-\delta\leq \lambda\leq \lambda_0+\delta.\]
Now $\dim\ker(\mathcal{A}_\lambda)=\dim\ker(\mathcal{A}_{\lambda_0})$ for all $\lambda$ by assumption, and so 

\[\dim\im(\chi_{[-\varepsilon, \varepsilon]}(\mathcal{A}_{\lambda}))=\dim\ker(\mathcal{A}_\lambda),\quad \lambda_0-\delta\leq \lambda\leq \lambda_0+\delta.\]
As $\ker(\mathcal{A}_\lambda)\subset\im(\chi_{[-\varepsilon, \varepsilon]}(\mathcal{A}_{\lambda}))$, this shows that $\im(\chi_{[-\varepsilon, \varepsilon]}(\mathcal{A}_{\lambda}))=\ker(\mathcal{A}_\lambda)$ for $\lambda_0-\delta\leq \lambda\leq \lambda_0+\delta$. Hence we obtain from the definition of the spectral flow that 

\begin{align*}
\sfl(\mathcal{A}\mid_{[\lambda_0-\delta,\lambda_0+\delta]})&=\dim\im(\chi_{[0, \varepsilon]}(\mathcal{A}_{\lambda_0+\delta}))-\dim\im(\chi_{[0, \varepsilon]}(\mathcal{A}_{\lambda_0-\delta}))\\
&=\dim\ker(\mathcal{A}_{\lambda_0+\delta})-\dim\ker(\mathcal{A}_{\lambda_0-\delta})=0,
\end{align*}  
where we have used that $\im(\chi_{[0, \varepsilon]}(\mathcal{A}_{\lambda}))$ is the direct sum of the eigenspaces of $\mathcal{A}_\lambda$ for eigenvalues in $[0,\varepsilon]$, $\lambda\in[\lambda_0-\delta,\lambda_0+\delta]$.\\
Now the assertion follows from the concatenation property (C).
\end{proof}
\noindent
The previous lemma will only be needed in its full generality for discussing the homotopy invariance in the next section. For the uniqueness of the spectral flow, we will use instead the following weaker statement:

\begin{enumerate}
 \item[(Z)] If $\mathcal{A}=\{\mathcal{A}_\lambda\}_{\lambda\in I}$ is a path in $\mathcal{CF}^\textup{sa}(H)$ such that $\mathcal{A}_\lambda$ is invertible for all $\lambda\in I$, then $\sfl(\mathcal{A})=0$.
\end{enumerate}


\section{A Comparison Theorem for the Spectral Flow}

\subsection{A Review of the Homotopy Invariance}\label{section-hominv}
Before we begin our discussion of the homotopy invariance, we note the following important though elementary fact about the spectral flow (see \cite{Phillips}).

\begin{lemma}\label{lemma-homotopytechnical}
Let $a>0$ and $N\subset\mathcal{CF}^\textup{sa}(H)$ be an open set such that $\pm a\notin\sigma(T)$, $[-a,a]\cap\sigma_{ess}(T)=\emptyset$ for all $T\in N$ and such that

\[N\ni T\mapsto \chi_{[-a,a]}(T)\in\mathcal{B}(H)\]
is continuous. If $\mathcal{A}^1$ and $\mathcal{A}^2$ are two paths in $N$ having the same initial and endpoint, i.e. $\mathcal{A}^1_0=\mathcal{A}^2_0$ and $\mathcal{A}^1_1=\mathcal{A}^2_1$, then

\[\sfl(\mathcal{A}^1)=\sfl(\mathcal{A}^2).\] 
\end{lemma}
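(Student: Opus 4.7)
The approach is to observe that the hypotheses placed on the neighbourhood $N$ are designed precisely to make the construction of the spectral flow collapse to a trivial partition: any path lying entirely in $N$ admits the one-block partition $0=\lambda_0<\lambda_1=1$ together with the single constant $a_1=a$, and hence its spectral flow reduces to an endpoint-difference formula that depends only on $\mathcal{A}_0$ and $\mathcal{A}_1$.

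\textbf{Step 1.} For each $i=1,2$, I would first check that the trivial partition with $a_1=a$ is admissible in the definition of $\sfl(\mathcal{A}^i)$. Because $\mathcal{A}^i$ takes values in $N$, the hypotheses on $N$ give $\pm a\notin\sigma(\mathcal{A}^i_\lambda)$ and $[-a,a]\cap\sigma_{ess}(\mathcal{A}^i_\lambda)=\emptyset$ for every $\lambda\in I$. Moreover, since $\mathcal{A}^i$ is gap-continuous into $N$ by definition of a path, and $T\mapsto\chi_{[-a,a]}(T)$ is continuous from $N$ to $\mathcal{B}(H)$ by assumption, the composition
\[
I\ni\lambda\mapsto\chi_{[-a,a]}(\mathcal{A}^i_\lambda)\in\mathcal{B}(H)
\]
is continuous. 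Thus the trivial partition is a legitimate choice in the construction leading to \eqref{def-sfl}.

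\textbf{Step 2.} Invoking the independence of the definition \eqref{def-sfl} from the partition and the choice of the $a_i$ (recorded in the excerpt right after \eqref{def-sfl}), I would evaluate $\sfl(\mathcal{A}^i)$ using this single-block partition to obtain
\[
\sfl(\mathcal{A}^i)=\dim\im\bigl(\chi_{[0,a]}(\mathcal{A}^i_1)\bigr)-\dim\im\bigl(\chi_{[0,a]}(\mathcal{A}^i_0)\bigr),\qquad i=1,2.
\]
Since $\mathcal{A}^1_0=\mathcal{A}^2_0$ and $\mathcal{A}^1_1=\mathcal{A}^2_1$, the two right-hand sides coincide, and the lemma follows. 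There is no real obstacle here: the whole content of the lemma is the observation that the hypotheses on $N$ are exactly what is needed to cover the entire path by a single spectral window, so that the well-definedness of \eqref{def-sfl} already packages the conclusion.
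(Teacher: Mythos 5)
Your proposal is correct and follows essentially the same approach as the paper: both apply the definition \eqref{def-sfl} with the trivial one-block partition and constant $a_1=a$, then observe that the resulting endpoint-difference formula depends only on $\mathcal{A}^i_0$ and $\mathcal{A}^i_1$. The paper is just terser, leaving implicit the verification in your Step~1 that the trivial partition is admissible.
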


\begin{proof}
By the definition of the spectral flow, we see that

\begin{align*}
\sfl(\mathcal{A}^1)&=\dim(\im(\chi_{[0,a]}(\mathcal{A}^1_1)))-\dim(\im(\chi_{[0,a]}(\mathcal{A}^1_0)))\\
&=\dim(\im(\chi_{[0,a]}(\mathcal{A}^2_1)))-\dim(\im(\chi_{[0,a]}(\mathcal{A}^2_0)))= \sfl(\mathcal{A}^2),
\end{align*}
where we have used that the initial and endpoints of the paths coincide.
\end{proof}
\noindent
Let us point out that the proof of the following theorem closely follows the proof of the homotopy invariance property in \cite{Phillips}.
 
\begin{theorem}\label{thm-hominv}
Let $h:I\times I\rightarrow\mathcal{CF}^\textup{sa}(H)$ be a homotopy of selfadjoint Fredholm operators. Then

\begin{align}\label{hominv}
\sfl(h(0,\cdot))=\sfl(h(\cdot,0))+\sfl(h(1,\cdot))-\sfl(h(\cdot,1)).
\end{align}
\end{theorem}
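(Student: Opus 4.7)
My plan is to show that the identity \eqref{hominv} is equivalent to the vanishing of the spectral flow around the boundary of the square $I\times I$, and to establish this by subdividing the square into small rectangles on which Lemma \ref{lemma-homotopytechnical} applies directly. Rearranging \eqref{hominv}, the claim becomes
\begin{align*}
\sfl(h(\cdot,0))+\sfl(h(1,\cdot))-\sfl(h(\cdot,1))-\sfl(h(0,\cdot))=0,
\end{align*}
which is the total spectral flow accumulated when traversing $\partial(I\times I)$ counterclockwise.

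The first step is the construction of a grid. For every $(s,t)\in I\times I$ Lemma \ref{lemma-sflconstruction} yields a number $a_{s,t}>0$ and an open neighbourhood $N_{h(s,t),a_{s,t}}$ of $h(s,t)$ in $\mathcal{CF}^\textup{sa}(H)$ on which the spectral projection $\chi_{[-a_{s,t},a_{s,t}]}$ is defined and norm-continuous. Pulling these neighbourhoods back by $h$ gives an open cover of the compact square $I\times I$, and the Lebesgue number lemma produces partitions $0=s_0<s_1<\ldots<s_n=1$ and $0=t_0<t_1<\ldots<t_m=1$ such that every closed subrectangle $R_{ij}=[s_{i-1},s_i]\times[t_{j-1},t_j]$ is mapped into a single such neighbourhood $N_{ij}$ with associated constant $a_{ij}>0$.

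The second step is local cancellation. On each $R_{ij}$ the two polygonal paths from the corner $(s_{i-1},t_{j-1})$ to the opposite corner $(s_i,t_j)$ that go along the two pairs of adjacent sides both lie inside $N_{ij}$ and share endpoints. Applying Lemma \ref{lemma-homotopytechnical} and the concatenation property (C) to these two paths yields
\begin{align*}
\sfl(h(\cdot,t_{j-1})|_{[s_{i-1},s_i]})+\sfl(h(s_i,\cdot)|_{[t_{j-1},t_j]})=\sfl(h(s_{i-1},\cdot)|_{[t_{j-1},t_j]})+\sfl(h(\cdot,t_j)|_{[s_{i-1},s_i]}).
\end{align*}
Rewriting this as the statement that the signed sum of spectral flows around $\partial R_{ij}$ counterclockwise vanishes, I sum over all $(i,j)$. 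Each interior edge of the grid is shared by two adjacent rectangles with opposite induced orientations, so its contributions cancel, and what remains is the signed sum of spectral flows along the outer boundary of $I\times I$. Using property (C) again to reassemble the partial spectral flows on each side, this outer sum is exactly the rearranged left-hand side of \eqref{hominv}, which therefore vanishes.

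The main obstacle is the combinatorial bookkeeping of Step 1: one has to choose the partitions in $s$ and $t$ simultaneously so that every closed subrectangle lies in a single neighbourhood where a common $a_{ij}$ and continuous spectral projection exist, which is precisely what the Lebesgue number lemma guarantees. Once the grid is in place, the remainder of the argument is a discrete Stokes theorem for spectral flow and is essentially formal.
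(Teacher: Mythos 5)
Your proposal is correct and follows essentially the same approach as the paper: subdivide $I\times I$ into small rectangles via Lemma \ref{lemma-sflconstruction} and the Lebesgue number lemma, apply Lemma \ref{lemma-homotopytechnical} together with (C) on each subrectangle to obtain the local relation between the spectral flows of its four sides, and then sum. The only difference is cosmetic: you phrase the global summation as a discrete Stokes-type cancellation of interior edges, whereas the paper telescopes row by row; these are the same bookkeeping.
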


\begin{proof}
As $h(I\times I)\subset\mathcal{CF}^\textup{sa}(H)$ is compact, we can find by Lemma \ref{lemma-sflconstruction} an open covering of this set by finitely many open sets $N_i$, $i=1,\ldots,n$, which are as in Lemma \ref{lemma-homotopytechnical}. Now let $\varepsilon$ be a Lebesgue number of the open covering of $I\times I$ made by the $n$ preimages $h^{-1}(N_i)$, i.e. each subset of $I\times I$ of diameter less than $\varepsilon$ is contained in one of the $h^{-1}(N_i)$.\\
We choose a partition $0=\lambda_0\leq\ldots\leq\lambda_m=1$ such that $|\lambda_i-\lambda_{i-1}|\leq\frac{\varepsilon}{\sqrt{2}}$ for $1\leq i\leq m$. Then for each $1\leq i,j\leq m$, the image $h([\lambda_{i-1},\lambda_i]\times[\lambda_{j-1},\lambda_j])$ is contained in one of the sets $N_k$. Let us now consider the four paths obtained from the boundary of the square $[\lambda_{i-1},\lambda_i]\times[\lambda_{j-1},\lambda_j]$, i.e. the two horizontal paths

\begin{align*}
h^h_{i-1,j}(\lambda)=h(\lambda_{i-1},\lambda),\, \lambda\in[\lambda_{j-1},\lambda_j],\qquad h^h_{i,j}(\lambda)=h(\lambda_{i},\lambda),\, \lambda\in[\lambda_{j-1},\lambda_j]
\end{align*}   
and the two vertical paths

\begin{align*}
h^v_{i,j-1}(\lambda)=h(\lambda,\lambda_{j-1}),\, \lambda\in[\lambda_{i-1},\lambda_i],\qquad  h^v_{i,j}(\lambda)=h(\lambda,\lambda_{j}),\, \lambda\in[\lambda_{i-1},\lambda_i].
\end{align*}
If we denote by $(h^v_{i,j})'$ the reverse path of $h^v_{i,j}$, then it readily follows from the definition of the spectral flow that $\sfl((h^v_{i,j})')=-\sfl(h^v_{i,j})$. Moreover, $h^v_{i,j-1}\ast h^h_{i,j}\ast (h^v_{i,j})'$ is a path in $N_k$ having the same initial and endpoint as $h^h_{i-1,j}$. Consequently, by Lemma \ref{lemma-homotopytechnical} and (C), 

\begin{align}\label{equ-proofhomotopy}
\begin{split}
\sfl(h^h_{i-1,j})&=\sfl(h^v_{i,j-1}\ast h^h_{i,j}\ast (h^v_{i,j})')=\sfl(h^v_{i,j-1})+\sfl(h^h_{i,j})+\sfl((h^v_{i,j})')\\
&=\sfl(h^v_{i,j-1})+\sfl(h^h_{i,j})-\sfl(h^v_{i,j}).
\end{split}
\end{align} 
Now,

\begin{align*}
\sfl(h(0,\cdot))&=\sum^m_{j=1}{\sfl(h^h_{0,j})}=\sum^m_{j=1}{\left(\sfl(h^v_{1,j-1})+\sfl(h^h_{1,j})-\sfl(h^v_{1,j})\right)}\\
&=\sfl(h^v_{1,0})-\sfl(h^v_{1,m})+\sum^m_{j=1}{\sfl(h^h_{1,j})}.
\end{align*}
As, again by \eqref{equ-proofhomotopy},

\begin{align*}
\sum^m_{j=1}{\sfl(h^h_{1,j})}&=\sum^m_{j=1}{\left(\sfl(h^v_{2,j-1})+\sfl(h^h_{2,j})-\sfl(h^v_{2,j})\right)}=\sfl(h^v_{2,0})-\sfl(h^v_{2,m})+\sum^m_{j=1}{\sfl(h^h_{2,j})},
\end{align*}
we obtain

\begin{align*}
\sfl(h(0,\cdot))&=\sfl(h^v_{1,0})+\sfl(h^v_{2,0})-\sfl(h^v_{1,m})-\sfl(h^v_{2,m})+\sum^m_{j=1}{\sfl(h^h_{2,j})}.
\end{align*}
If we continue this procedure until we arrive at $i=m$, we get

\begin{align*}
\sfl(h(0,\cdot))&=\sum^m_{i=1}{\sfl(h^v_{i,0})}-\sum^m_{i=1}{\sfl(h^v_{i,m})}+\sum^m_{j=1}{\sfl(h^h_{m,j})}\\
&=\sfl(h(\cdot,0))-\sfl(h(\cdot,1))+\sfl(h(1,\cdot)),
\end{align*}
where we have used (C) in the last equality. This is the claimed equation.
\end{proof}
\noindent
\begin{rem}
There is an alternative way to obtain Theorem \ref{thm-hominv}. One can show at first the homotopy invariance for closed paths, which we below obtain as a corollary of \eqref{hominv}. Then, given a general homotopy $h:I\times I\rightarrow\mathcal{CF}^\textup{sa}(H)$, the spectral flow of the path obtained by restricting $h$ to the boundary of $I\times I$ vanishes. Now \eqref{hominv} readily follows from (C), (Z) and the fact that the spectral flow changes its sign if we reverse the orientation of a path. However, the homotopy invariance for closed paths is hardly easier to prove than \eqref{hominv}.
\end{rem}
\noindent
The following corollary is an immediate consequence of Theorem \ref{thm-hominv} and (Z). It can be considered as the most general form of the homotopy invariance property of the spectral flow.

\begin{cor}\label{cor-hominv}
Let $h:I\times I\rightarrow\mathcal{CF}^\textup{sa}(H)$ be a homotopy of selfadjoint Fredholm operators such that $\sfl(h(\cdot,0))=\sfl(h(\cdot,1))$. Then

\[\sfl(h(0,\cdot))=\sfl(h(1,\cdot)).\]
\end{cor}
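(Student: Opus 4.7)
The plan is to deduce this corollary as an algebraic consequence of the identity proved in Theorem~\ref{thm-hominv}, with no new analytic input required. The theorem supplies the relation
\begin{align*}
\sfl(h(0,\cdot)) = \sfl(h(\cdot,0)) + \sfl(h(1,\cdot)) - \sfl(h(\cdot,1)),
\end{align*}
and the hypothesis $\sfl(h(\cdot,0)) = \sfl(h(\cdot,1))$ causes the first and last terms on the right-hand side to cancel. What remains is exactly $\sfl(h(0,\cdot)) = \sfl(h(1,\cdot))$, so the proof reduces to one substitution.

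The role of property (Z) in the author's statement is not in the logical derivation itself but in making the hypothesis easy to verify in practice. For example, whenever $h(\cdot,0)$ and $h(\cdot,1)$ are paths of invertible operators — the situation one most often wants to capture by a homotopy invariance statement — property (Z) forces both spectral flows to vanish, so the assumption $\sfl(h(\cdot,0)) = \sfl(h(\cdot,1))$ is automatic and the corollary recovers the classical form of homotopy invariance for paths with invertible endpoints. Similarly, it applies to free homotopies of closed paths, where $h(\cdot,0) = h(\cdot,1)$ and the hypothesis is trivially satisfied.

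There is no genuine obstacle here: once Theorem~\ref{thm-hominv} is available, the corollary is purely a rearrangement. The only thing I would pay attention to while writing it out is to display the application of the theorem explicitly so that the reader sees which two terms cancel, and perhaps to remark parenthetically on the role of (Z) in the typical use cases, in order to motivate why the statement is phrased with this particular cancellation hypothesis rather than with a direct invertibility condition on the endpoints.
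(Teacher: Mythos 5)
Your derivation is correct and is exactly the intended one: substitute the hypothesis $\sfl(h(\cdot,0))=\sfl(h(\cdot,1))$ into the identity of Theorem~\ref{thm-hominv} and cancel, with no further input. You are also right to note that property (Z) plays no role in this derivation, even though the paper announces the corollary as ``an immediate consequence of Theorem~\ref{thm-hominv} and (Z)''; the reference to (Z) really belongs to the downstream specialisations (such as (HI) and the case of closed loops), where it is used to verify the cancellation hypothesis, not to the corollary itself.
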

\noindent
As a first consequence of Corollary \ref{cor-hominv}, we obtain from Lemma \ref{lemma-kernelconstant} the homotopy invariance property stated in \cite{UnbSpecFlow}.

\begin{cor}\label{cor-hominvkernel}
Let $h:I\times I\rightarrow\mathcal{CF}^\textup{sa}(H)$ be a homotopy of selfadjoint Fredholm operators such that $\dim\ker(h(s,0))$ and $\dim\ker(h(s,1))$ are constant. Then 
\[\sfl(h(0,\cdot))=\sfl(h(1,\cdot)).\]
\end{cor}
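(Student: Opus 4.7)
The plan is very short because the two main ingredients have already been set up. I would simply combine Lemma \ref{lemma-kernelconstant} with Corollary \ref{cor-hominv}.

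First, I would look at the two ``boundary'' paths $s\mapsto h(s,0)$ and $s\mapsto h(s,1)$. By hypothesis, each of these is a path in $\mathcal{CF}^{\textup{sa}}(H)$ along which the dimension of the kernel is constant. Lemma \ref{lemma-kernelconstant} then applies directly and yields
\[
\sfl(h(\cdot,0))=0=\sfl(h(\cdot,1)).
\]

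In particular $\sfl(h(\cdot,0))=\sfl(h(\cdot,1))$, which is exactly the hypothesis of Corollary \ref{cor-hominv}. Applying that corollary to the same homotopy $h$ immediately gives
\[
\sfl(h(0,\cdot))=\sfl(h(1,\cdot)),
\]
which is the required assertion.

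Since every step is a direct invocation of a previously proved statement, there is no real obstacle: the only thing to observe is that the hypothesis on constancy of kernel dimensions is made precisely so that Lemma \ref{lemma-kernelconstant} can be fed into the ``$\sfl=\sfl$ at the two endpoints'' version of the homotopy invariance stated in Corollary \ref{cor-hominv}. No further computation or auxiliary construction is needed.
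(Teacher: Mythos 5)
Your proof is correct and follows exactly the route the paper indicates: apply Lemma \ref{lemma-kernelconstant} to the two boundary paths $s\mapsto h(s,0)$ and $s\mapsto h(s,1)$ to see that both have vanishing spectral flow, and then invoke Corollary \ref{cor-hominv}. The paper states this corollary without a written-out proof precisely because the argument is this immediate, so your proposal fills in the same two steps the authors had in mind.
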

\noindent
Let us note two immediate consequences of the previous corollary that we will need in Section \ref{section-uniqueness} in our discussion of the uniqueness of the spectral flow.

\begin{itemize}
 \item[(H)] Let $h:I\times I\rightarrow\mathcal{CF}^\textup{sa}(H)$ be a homotopy of selfadjoint Fredholm operators such that $h(s,0)$ and $h(s,1)$ are constant for all $s\in I$. Then 
\[\sfl(h(0,\cdot))=\sfl(h(1,\cdot)).\]
 \item[(HI)] Let $h:I\times I\rightarrow\mathcal{CF}^\textup{sa}(H)$ be a homotopy of selfadjoint Fredholm operators such that $h(s,0)$ and $h(s,1)$ are invertible for all $s\in I$. Then 
\[\sfl(h(0,\cdot))=\sfl(h(1,\cdot)).\]
\end{itemize}
A further consequence of Corollary \ref{cor-hominvkernel} is that the spectral flow is invariant under homotopies of based loops, i.e. $\sfl(h(0,\cdot))=\sfl(h(1,\cdot))$ if $h:I\times I\rightarrow\mathcal{CF}^\textup{sa}(H)$ is such that $h(s,0)=h(s,1)=T_0$ for all $s\in I$ and some $T_0\in\mathcal{CF}^\textup{sa}(H)$. We conclude this section by noting that, as a consequence of Theorem \ref{thm-hominv}, the spectral flow actually is invariant under free homotopies of loops, which was shown by a different argument for bounded operators in \cite[Prop. 3.8]{SFLPejsachowicz}.

\begin{cor}\label{cor-freehomotopy}
Let $h:I\times I\rightarrow\mathcal{CF}^\textup{sa}(H)$ be a homotopy of selfadjoint Fredholm operators such that $h(s,0)=h(s,1)$ for all $s\in I$. Then

\[\sfl(h(0,\cdot))=\sfl(h(1,\cdot)).\]
\end{cor}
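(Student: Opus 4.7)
The plan is to deduce this directly from Theorem \ref{thm-hominv}, which already encodes the general change-of-spectral-flow formula along a square homotopy. That theorem gives
\[
\sfl(h(0,\cdot))=\sfl(h(\cdot,0))+\sfl(h(1,\cdot))-\sfl(h(\cdot,1)),
\]
so the statement reduces to showing that the two ``horizontal'' terms $\sfl(h(\cdot,0))$ and $\sfl(h(\cdot,1))$ cancel.

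The key observation is that the hypothesis $h(s,0)=h(s,1)$ for every $s\in I$ is not merely a pointwise equality of endpoints; it literally says that the two paths $s\mapsto h(s,0)$ and $s\mapsto h(s,1)$ are the same map $I\to\mathcal{CF}^\textup{sa}(H)$. Consequently $\sfl(h(\cdot,0))=\sfl(h(\cdot,1))$, and the right-hand side of the displayed equation collapses to $\sfl(h(1,\cdot))$, which is precisely the desired conclusion.

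I do not see any genuine obstacle: the homotopy invariance statement needed to conclude free-loop invariance is already the strongest form, Theorem \ref{thm-hominv}, and the hypothesis is used in the most straightforward way. One could alternatively argue via Corollary \ref{cor-hominvkernel} by concatenating $h(s,\cdot)$ with a short path that trivialises the varying basepoint, but this is unnecessary detour; the one-line cancellation in \eqref{hominv} is cleaner and is the route I would present.
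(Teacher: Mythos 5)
Your proof is correct and is exactly the paper's intended argument: the paper states Corollary \ref{cor-freehomotopy} as a direct consequence of Theorem \ref{thm-hominv}, with the hypothesis $h(s,0)=h(s,1)$ making the two paths $s\mapsto h(s,0)$ and $s\mapsto h(s,1)$ literally identical, so the terms $\sfl(h(\cdot,0))$ and $\sfl(h(\cdot,1))$ in \eqref{hominv} cancel.
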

\noindent
As an application of the previous corollary, we show in the following section that the spectral flow of closed paths in $\mathcal{CF}^\textup{sa}(H)$ is invariant under compact perturbations.


\subsection{A Comparison Theorem under Compact Perturbations}
In this section we study perturbations of gap-continuous paths in $\mathcal{CF}^\textup{sa}(H)$ by paths of relatively compact selfadjoint operators on $H$. First of all, we need to pay attention to the question whether the pointwise sum of two gap-continuous paths of closed operators is gap-continuous. Note that this question does not even make sense in this generality as the sum of two closed operators is not necessarily closed, however, the following theorem is sufficient for our purposes. Let us recall that we denote by $\mathcal{C}(H)$ the set of all closed operators on $H$, which canonically is a metric space with respect to the gap-metric \eqref{gap}. 

\begin{theorem}\label{Kato}
Let $T,S\in\mathcal{C}(H)$ and $A,B\in\mathcal{B}(H)$. Then $T+A, S+B\in\mathcal{C}(H)$ and

\begin{align}\label{gapestimate}
d_G(T+A,S+B)\leq 2\sqrt{2}\sqrt{1+\|A\|^2}\sqrt{1+\|B\|^2}(d_G(T,S)+\|A-B\|).
\end{align}
\end{theorem}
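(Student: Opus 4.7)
The plan is to reduce the statement to a computation involving bounded operators on $H\oplus H$, by observing that the graphs of the perturbed operators are obtained from the graphs of the unperturbed ones by an explicit bounded linear isomorphism of $H\oplus H$.

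First I would note the standard fact that the sum of a closed operator and a bounded one, taken with the domain of the closed operator, is again closed, so that $T+A,S+B\in\mathcal{C}(H)$. The main device is the family of bounded linear maps $\Phi_A\colon H\oplus H\to H\oplus H$, $\Phi_A(x,y)=(x,y+Ax)$, defined for every $A\in\mathcal{B}(H)$. A short verification shows that $\Phi_A$ is a bijection with inverse $\Phi_{-A}$, and from $\|y+Ax\|^2\leq 2\|y\|^2+2\|A\|^2\|x\|^2$ one reads off $\|\Phi_A\|\leq\sqrt{2}\sqrt{1+\|A\|^2}$, and analogously $\|\Phi_A^{-1}\|\leq\sqrt{2}\sqrt{1+\|A\|^2}$. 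Two further identities will drive the proof: $\Phi_A(G(T))=G(T+A)$, and $(\Phi_A-\Phi_B)(x,y)=(0,(A-B)x)$, hence $\|\Phi_A-\Phi_B\|\leq\|A-B\|$.

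For the gap estimate I would invoke the standard fact (Kato, Theorem I.6.34) that for closed subspaces $M,N$ of a Hilbert space
\[
\|P_M-P_N\|=\max\bigl(\delta(M,N),\,\delta(N,M)\bigr),\qquad \delta(M,N)=\sup_{u\in M,\,\|u\|=1}\mathrm{dist}(u,N).
\]
In particular, for any $u\in G(T)$ one may choose $u'\in G(S)$ with $\|u-u'\|\leq d_G(T,S)\|u\|$. Given a unit vector $v\in G(T+A)$, write $v=\Phi_A u$ with $u\in G(T)$ and $\|u\|\leq\|\Phi_A^{-1}\|$, pick $u'\in G(S)$ as above, and set $v':=\Phi_B u'\in G(S+B)$. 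The decomposition
\[
v-v'=\Phi_B(u-u')+(\Phi_A-\Phi_B)u
\]
together with the triangle inequality yields
\[
\|v-v'\|\leq\|\Phi_A^{-1}\|\bigl(\|\Phi_B\|\,d_G(T,S)+\|A-B\|\bigr),
\]
and substituting the norm bounds gives the sought control on $\delta(G(T+A),G(S+B))$. Swapping the roles of $(T,A)$ and $(S,B)$ gives the analogous bound on $\delta(G(S+B),G(T+A))$, and taking the maximum proves the theorem.

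The only genuine obstacle is the bookkeeping of the four norm factors. The $d_G(T,S)$ term naturally acquires the product $\|\Phi_A^{-1}\|\|\Phi_B\|\leq 2\sqrt{(1+\|A\|^2)(1+\|B\|^2)}$, while the $\|A-B\|$ term a priori only acquires $\|\Phi_A^{-1}\|\leq\sqrt{2}\sqrt{1+\|A\|^2}$; one then absorbs the missing $\sqrt{1+\|B\|^2}$ (resp.\ $\sqrt{1+\|A\|^2}$ in the symmetric step) by $1\leq\sqrt{1+\|\cdot\|^2}$, so that both terms are dominated by the symmetric factor $2\sqrt{2}\sqrt{1+\|A\|^2}\sqrt{1+\|B\|^2}$ appearing in the statement. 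The decomposition $v-v'=\Phi_B(u-u')+(\Phi_A-\Phi_B)u$ is chosen over the alternative $\Phi_A(u-u')+(\Phi_A-\Phi_B)u'$ precisely so that $\|u'\|$, whose natural bound involves $d_G(T,S)$ and would spoil the linearity in $(d_G(T,S)+\|A-B\|)$, never enters the estimate.
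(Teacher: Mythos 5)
Your proposal is correct, and it takes a noticeably cleaner route than the paper, though the underlying geometry is the same. The paper also starts from a unit vector in one perturbed graph, transfers it to the unperturbed graph, finds a nearby element of the other unperturbed graph, and transfers back — but it carries out all the bookkeeping at the level of components $(u,Su)$, repeatedly invoking $2ab\leq a^2+b^2$ to control cross-terms such as $\|Su-Tv+Bu-Av\|^2$. Your device of making the shear maps $\Phi_A$ explicit, with the identities $\Phi_A(G(T))=G(T+A)$, $\|\Phi_A\|\leq\sqrt{2}\sqrt{1+\|A\|^2}$, $\Phi_A^{-1}=\Phi_{-A}$, and $\|\Phi_A-\Phi_B\|\leq\|A-B\|$, replaces these vector-level manipulations by operator-norm estimates, so the constant emerges structurally from the product $\|\Phi_A^{-1}\|\,\|\Phi_B\|$ rather than from a chain of squared inequalities. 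In fact your decomposition
\[
v-v'=\Phi_B(u-u')+(\Phi_A-\Phi_B)u
\]
combined with the triangle inequality in $H\oplus H$ yields the slightly sharper asymmetric bound
\[
\delta\bigl(G(T+A),G(S+B)\bigr)\leq 2\sqrt{(1+\|A\|^2)(1+\|B\|^2)}\,d_G(T,S)+\sqrt{2}\sqrt{1+\|A\|^2}\,\|A-B\|,
\]
with coefficient $2$ rather than $2\sqrt{2}$ on the $d_G$ term — consistent with the $A=B$ case of Kato that the paper cites as its model. You then relax both coefficients to the common symmetric factor $2\sqrt{2}\sqrt{1+\|A\|^2}\sqrt{1+\|B\|^2}$, recovering exactly the stated inequality. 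One minor remark: your claim that $u'\in G(S)$ can be chosen with $\|u-u'\|\leq d_G(T,S)\|u\|$ (rather than $\leq(d_G(T,S)+\varepsilon)\|u\|$) is justified here because $G(S)$ is a closed subspace of the Hilbert space $H\oplus H$, so the distance is realised by the orthogonal projection; the paper instead works with an arbitrary $\delta'>d_G(S,T)$ and lets $\delta'\to d_G(S,T)$, which is the more robust phrasing and would be preferable if you ever need the argument outside the Hilbert setting.
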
  

\begin{proof}
We first note that the set of all closed operators is invariant under additive perturbations of bounded operators (\cite[Prob. III.5.6]{Kato} or \cite[Thm. IV.1.1]{Kato}), which shows the first assertion.\\
Before we begin the proof of \eqref{gapestimate}, we recall an alternative way to compute the gap-metric on $\mathcal{C}(H)$. For $T,S\in\mathcal{C}(H)$, we set

\[\delta(T,S)=\sup_{u\in S_T}{d(u,\gra(S))},\]
where $S_T$ denotes the unit sphere in $\gra(T)$ and $d(u,\gra(S))=\inf_{v\in\gra(S)}\|u-v\|$. By \cite[IV.2]{Kato},

\begin{align}\label{gapequivalent}
d_G(T,S)=\max\{\delta(T,S),\delta(S,T)\}.
\end{align}  
Let now $T,S,A,B$ be as in the assertion of the theorem and consider some $\varphi\in\gra(S+B)$, $\|\varphi\|=1$. Then there exists $u\in\mathcal{D}(S)$ such that $\varphi=(u,(S+B)u)$ and 

\[\|u\|^2+\|(S+B)u\|^2=\|\varphi\|^2=1.\]
We set $r^2:=\|u\|^2+\|Su\|^2>0$ and note for later reference the inequality

\begin{align}\label{ClosedOps-align-pertinequality}
\begin{split}
r^2&=\|u\|^2+\|(S+B)u-Bu\|^2\leq \|u\|^2+2\|(S+B)u\|^2+2\|Bu\|^2\\
&\leq2(\|u\|^2+\|(S+B)u\|^2)+2\|B\|^2\|u\|^2\leq2(1+\|B\|^2),
\end{split}
\end{align}
where we have used that $\|u\|^2+\|(S+B)u\|^2=1$ and so in particular $\|u\|\leq 1$.\\
As $r^{-1}(u,Su)\in S_S$ we see by \eqref{gapequivalent} that for all $\delta'>d_G(S,T)$ 

\begin{align*}
d(r^{-1}(u,Su),\gra(T))\leq \sup_{w\in S_{S}}d(w,\gra(T))\leq d_G(S,T)<\delta'.
\end{align*}
Consequently, $d((u,Su),\gra(T))<r\delta'$ and so there is $v\in\mathcal{D}(T)$ such that 
 
\begin{align}\label{ClosedOps-align-uvinequ}
\|u-v\|^2+\|Su-Tv\|^2<r^2\delta'^2.
\end{align}
Let now $\psi=(v,(T+A)v)\in\gra(T+A)$. Then

\begin{align*}
\|\varphi-\psi\|^2&=\|(u,(S+B)u)-(v,(T+A)v)\|^2=\|u-v\|^2+\|Su-Tv+Bu-Av\|^2\\
&\leq \|u-v\|^2+2\|Su-Tv\|^2+2\|Bu-Av\|^2\\
&\leq2(\|u-v\|^2+\|Su-Tv\|^2)+2\|Bu-Av\|^2.
\end{align*}
By \eqref{ClosedOps-align-uvinequ} and as $\|u\|\leq r$ by the definition of $r$, we get

\begin{align*}
\|\varphi-\psi\|^2&\leq2r^2\delta'^2+2\|Bu-Av\|^2\leq2r^2\delta'^2+2(\|Av-Au\|+\|Au-Bu\|)^2\\
&\leq2r^2\delta'^2+4\|A\|^2\|v-u\|^2+4\|A-B\|^2\|u\|^2\\
&\leq2(1+2\|A\|^2)r^2\delta'^2+4\|A-B\|^2\|u\|^2\\
&\leq2(1+2\|A\|^2)r^2\delta'^2+4r^2\|A-B\|^2,
\end{align*}
which implies by \eqref{ClosedOps-align-pertinequality}

\begin{align*}
\|\varphi-\psi\|^2&\leq4(1+2\|A\|^2)(1+\|B\|^2)\delta'^2+8(1+\|B\|^2)\|A-B\|^2\\
&\leq8(1+\|A\|^2)(1+\|B\|^2)\delta'^2+8(1+\|A\|^2)(1+\|B\|^2)\|A-B\|^2\\
&\leq8(1+\|A\|^2)(1+\|B\|^2)(\delta'^2+\|A-B\|^2)\\
&\leq8(1+\|A\|^2)(1+\|B\|^2)(\delta'+\|A-B\|)^2.
\end{align*}
Consequently,

\begin{align}\label{Families-align-inequalityII}
\|\varphi-\psi\|\leq2\sqrt{2}\sqrt{1+\|A\|^2}\sqrt{1+\|B\|^2}(\delta'+\|A-B\|),
\end{align}
which shows that

\begin{align*}
d(\varphi,\gra(T+A))&=\inf_{\tilde{\psi}\in\gra(T+A)}\|\varphi-\tilde{\psi}\|\leq\|\varphi-\psi\|\\
&\leq 2\sqrt{2}\sqrt{1+\|A\|^2}\sqrt{1+\|B\|^2}(\delta'+\|A-B\|)
\end{align*}
for any $\varphi\in\gra(S+B)$, $\|\varphi\|=1$. Thus

\begin{align*}
\delta(\gra(S+B),\gra(T+A))&=\sup_{\tilde{\varphi}\in S_{S+B}}d(\tilde{\varphi},\gra(T+A))\\
&\leq2\sqrt{2}\sqrt{1+\|A\|^2}\sqrt{1+\|B\|^2}(\delta'+\|A-B\|).
\end{align*}
As $\delta'$ is an arbitrary number greater than $d_G(S,T)$, we obtain

\begin{align*}
\delta(\gra(S+B),\gra(T+A))\leq2\sqrt{2}\sqrt{1+\|A\|^2}\sqrt{1+\|B\|^2}(d_G(S,T)+\|A-B\|).
\end{align*}
Finally, since the right hand side of this inequality is symmetric in $T+A$ and $S+B$, we see by \eqref{gapequivalent} that

\begin{align*}
d_G(T+A,S+B)\leq 2\sqrt{2}\sqrt{1+\|A\|^2}\sqrt{1+\|B\|^2}(d_G(S,T)+\|A-B\|),
\end{align*}
which is the claimed inequality \eqref{gapestimate}.
\end{proof}
\noindent
Let us point out that the argument in the proof of Theorem \ref{Kato} is based on \cite[Thm.IV.2.17]{Kato}, where \eqref{gapestimate} is obtained in the case that $A=B$ with the constant $2$ instead of $2\sqrt{2}$.\\ 
Before coming to our comparison theory for the spectral flow, we briefly want to recall the notion of \textit{relative compactness} for operators. Let $T:\mathcal{D}(T)\subset H\rightarrow H$ and $A:\mathcal{D}(A)\subset H\rightarrow H$ be operators such that $\mathcal{D}(T)\subset\mathcal{D}(A)$. Then $A$ is called $T$-compact if for every bounded sequence $\{u_n\}_{n\in\mathbb{N}}\subset \mathcal{D}(T)$ with $\{T u_n\}_{n\in\mathbb{N}}$ bounded, $\{Au_n\}_{n\in\mathbb{N}}$ contains a convergent subsequence. Let us also recall that for every $T:\mathcal{D}(T)\subset H\rightarrow H$, the domain $\mathcal{D}(T)$ canonically becomes a normed linear space with respect to the \textit{graph norm}

\[\|u\|^2_T=\|u\|^2+\|Tu\|^2,\quad u\in\mathcal{D}(T).\] 
In what follows we write $\mathcal{D}(T)_G$ when we regard $\mathcal{D}(T)$ as a normed linear space with respect to the graph norm. It is readily seen that $\mathcal{D}(T)_G$ is a Hilbert space if and only if $T\in\mathcal{C}(H)$. Finally, the operator $A$ is $T$-compact if and only if $A:\mathcal{D}(T)_G\rightarrow H$ is compact.\\
Now let $\mathcal{A}=\{\mathcal{A}_\lambda\}_{\lambda\in I}$ be a gap-continuous path in $\mathcal{CF}^\textup{sa}(H)$ and let $\mathcal{K}=\{\mathcal{K}_\lambda\}_{\lambda\in I}$ be a norm-continuous path of selfadjoint operators in $\mathcal{B}(H)$ such that each $\mathcal{K}_\lambda$ is $\mathcal{A}_\lambda$-compact. We set $\mathcal{A}+\mathcal{K}=\{\mathcal{A}_\lambda+\mathcal{K}_\lambda\}_{\lambda\in I}$ which is a gap-continuous path in $\mathcal{C}(H)$ by Theorem \ref{Kato}. As $\mathcal{A}_\lambda+\mathcal{K}_\lambda$ is also selfadjoint and Fredholm by well known perturbation theory (see \cite[Thm. V.4.3]{Kato} and \cite[Thm. IV.5.26]{Kato}), it follows that $\mathcal{A}+\mathcal{K}$ actually is a path in $\mathcal{CF}^\textup{sa}(H)$ and so has a spectral flow. Note that it is easy to construct examples of this type with a non-trivial spectral flow. For example, let $\mathcal{A}_\lambda=T\in\mathcal{CF}^\textup{sa}(H)$ be constant and such that $T$ has a compact resolvent. Then the spectrum of $T$ consists of isolated eigenvalues of finite multiplicity (see \cite[Thm. III.6.29]{Kato}). Moreover, it is easy to see that $K_\lambda=\alpha\lambda I_H$ is $T$-compact for any $\alpha>0$. Finally, it follows from \eqref{def-sfl} that the spectral flow of $\mathcal{A}+\mathcal{K}$ is the number of eigenvalues of $T$ in $[-\alpha,0)$. On the other hand, we next see as a consequence of Corollary \ref{cor-freehomotopy} that non-trivial relatively compact perturbations can equally well make no contribution to the spectral flow.

\begin{cor}\label{cor-compper}
If $\mathcal{A}_0=\mathcal{A}_1$ and $\mathcal{K}_0=\mathcal{K}_1$, then

\[\sfl(\mathcal{A}+\mathcal{K})=\sfl(\mathcal{A}).\]
\end{cor}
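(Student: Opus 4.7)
The natural strategy is to interpolate linearly between the two loops and invoke the free-homotopy invariance from Corollary \ref{cor-freehomotopy}. Define
\[
h:I\times I\longrightarrow\mathcal{CF}^\textup{sa}(H),\qquad h(s,\lambda)=\mathcal{A}_\lambda+s\,\mathcal{K}_\lambda.
\]
Then $h(0,\cdot)=\mathcal{A}$ and $h(1,\cdot)=\mathcal{A}+\mathcal{K}$, while the hypotheses $\mathcal{A}_0=\mathcal{A}_1$ and $\mathcal{K}_0=\mathcal{K}_1$ give
\[
h(s,0)=\mathcal{A}_0+s\mathcal{K}_0=\mathcal{A}_1+s\mathcal{K}_1=h(s,1)\qquad \textup{for every }s\in I,
\]
so $h$ is a free homotopy of loops in the sense of Corollary \ref{cor-freehomotopy}, provided it actually takes values in $\mathcal{CF}^\textup{sa}(H)$ and is gap-continuous.

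The first point is essentially already contained in the discussion preceding the corollary: $s\mathcal{K}_\lambda$ is bounded, selfadjoint and $\mathcal{A}_\lambda$-compact (the latter because $\mathcal{K}_\lambda$ is), so $\mathcal{A}_\lambda+s\mathcal{K}_\lambda\in\mathcal{CF}^\textup{sa}(H)$ by \cite[Thm.~V.4.3, Thm.~IV.5.26]{Kato}. For the second point I would apply Theorem \ref{Kato} with $T=\mathcal{A}_\lambda$, $S=\mathcal{A}_{\lambda'}$, $A=s\mathcal{K}_\lambda$, $B=s'\mathcal{K}_{\lambda'}$ to obtain
\[
d_G\bigl(h(s,\lambda),h(s',\lambda')\bigr)\leq 2\sqrt{2}\,\sqrt{1+\|s\mathcal{K}_\lambda\|^2}\sqrt{1+\|s'\mathcal{K}_{\lambda'}\|^2}\bigl(d_G(\mathcal{A}_\lambda,\mathcal{A}_{\lambda'})+\|s\mathcal{K}_\lambda-s'\mathcal{K}_{\lambda'}\|\bigr).
\]
Since $\lambda\mapsto\mathcal{K}_\lambda$ is norm-continuous on the compact interval $I$, the prefactor is uniformly bounded, and the bracket tends to $0$ as $(s',\lambda')\to(s,\lambda)$ by the gap-continuity of $\mathcal{A}$ together with the joint norm-continuity of $(s,\lambda)\mapsto s\mathcal{K}_\lambda$. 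Hence $h$ is gap-continuous.

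Corollary \ref{cor-freehomotopy} then yields
\[
\sfl(\mathcal{A})=\sfl(h(0,\cdot))=\sfl(h(1,\cdot))=\sfl(\mathcal{A}+\mathcal{K}),
\]
which is the desired equality. The only non-routine step is the gap-continuity of the straight-line homotopy, and this is precisely what Theorem \ref{Kato} was designed to supply; once it is in hand, the argument reduces to checking the boundary identifications and quoting the free-loop version of homotopy invariance.
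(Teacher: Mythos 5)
Your argument is exactly the paper's: the same straight-line homotopy $h(s,\lambda)=\mathcal{A}_\lambda+s\mathcal{K}_\lambda$, gap-continuity via Theorem \ref{Kato}, and the conclusion via Corollary \ref{cor-freehomotopy}. You have simply spelled out the continuity estimate and the $\mathcal{CF}^\textup{sa}(H)$-valuedness in more detail than the paper does, which is fine but not a different route.
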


\begin{proof}
We set $h:I\times I\rightarrow\mathcal{CF}^\textup{sa}(H)$, $h(\lambda,s)=\mathcal{A}_\lambda+s\mathcal{K}_\lambda$ which is a gap-continuous homotopy by Theorem \ref{Kato}. Now the assertion follows from the invariance of the spectral flow under free homotopies of closed paths stated in Corollary \ref{cor-freehomotopy}.
\end{proof}
\noindent 
The previous corollary was recently obtained by the second author in \cite{HomoclinicsInf} as a consequence of a rather technical $K$-theoretic description of the spectral flow for unbounded selfadjoint Fredholm operators. Corollary \ref{cor-compper} shows that this property can also be derived in a more direct way from the definition of the spectral flow.\\
We now come to the main theorem of this section which is a Comparison Principle for the spectral flow of compact perturbations. Let us first recall that there is a partial order on the set of all selfadjoint bounded operators $\mathcal{B}^\textup{sa}(H)$ on $H$, which is defined by

\[A\geq B \quad \text{if}\quad \langle (A-B)u,u\rangle\geq 0,\quad u\in H.\]
The main theorem of this section now reads as follows.

\begin{theorem}\label{thm-comparison}
Let $\mathcal{K}=\{\mathcal{K}_\lambda\}_{\lambda\in I}$ and $\mathcal{K}'=\{\mathcal{K}'_\lambda\}_{\lambda\in I}$ be two paths of operators in $\mathcal{B}^\textup{sa}(H)$ and let $\mathcal{A}=\{\mathcal{A}_\lambda\}_{\lambda\in I}$ be gap-continuous and such that $\mathcal{K}_\lambda$ and $\mathcal{K}'_\lambda$ are $\mathcal{A}_\lambda$-compact for any $\lambda\in I$. If

\[\mathcal{K}'_0\geq \mathcal{K}_0,\quad \mathcal{K}_1\geq \mathcal{K}'_1,\]
then

\[\sfl(\mathcal{A}+\mathcal{K})\geq \sfl(\mathcal{A}+\mathcal{K}').\]
\end{theorem}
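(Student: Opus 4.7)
The strategy is to evaluate $\sfl(\mathcal{A}+\mathcal{K})-\sfl(\mathcal{A}+\mathcal{K}')$ by applying the general homotopy formula of Theorem \ref{thm-hominv} to the obvious linear interpolation between $\mathcal{K}$ and $\mathcal{K}'$, and then to read off the sign of each of the two boundary contributions from the order hypotheses at $\lambda=0$ and $\lambda=1$.

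Concretely, I would set
\[ h(s,\lambda) = \mathcal{A}_\lambda + (1-s)\mathcal{K}_\lambda + s\mathcal{K}'_\lambda, \qquad (s,\lambda)\in I\times I. \]
For each $(s,\lambda)$ the perturbation $(1-s)\mathcal{K}_\lambda+s\mathcal{K}'_\lambda$ is a bounded selfadjoint operator which is $\mathcal{A}_\lambda$-compact as a linear combination of $\mathcal{A}_\lambda$-compact operators, so $h(s,\lambda)\in\mathcal{CF}^{\textup{sa}}(H)$ by the standard selfadjoint and Fredholm perturbation theorems already used in the discussion preceding Corollary \ref{cor-compper}; gap-continuity of $h$ in both variables is immediate from Theorem \ref{Kato}. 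Since $h(0,\cdot)=\mathcal{A}+\mathcal{K}$ and $h(1,\cdot)=\mathcal{A}+\mathcal{K}'$, Theorem \ref{thm-hominv} rearranges to
\[ \sfl(\mathcal{A}+\mathcal{K}) - \sfl(\mathcal{A}+\mathcal{K}') \;=\; \sfl(h(\cdot,0)) - \sfl(h(\cdot,1)), \]
so the theorem reduces to showing that the right-hand side is non-negative.

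Both $h(\cdot,0)$ and $h(\cdot,1)$ are affine paths: the first is $s\mapsto\mathcal{A}_0+\mathcal{K}_0+s(\mathcal{K}'_0-\mathcal{K}_0)$ with non-negative bounded $\mathcal{A}_0$-compact perturbation $\mathcal{K}'_0-\mathcal{K}_0\geq 0$, while the second is $s\mapsto\mathcal{A}_1+\mathcal{K}_1+s(\mathcal{K}'_1-\mathcal{K}_1)$ with non-positive perturbation $\mathcal{K}'_1-\mathcal{K}_1\leq 0$. The remaining, and main, step is therefore the following monotonicity principle for the spectral flow: if $T\in\mathcal{CF}^{\textup{sa}}(H)$ and $L\in\mathcal{B}^{\textup{sa}}(H)$ is $T$-compact, then $\sfl(s\mapsto T+sL)\geq 0$ whenever $L\geq 0$, and $\leq 0$ whenever $L\leq 0$. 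I expect this to be the main obstacle. To prove it I would subdivide $[0,1]$ via Lemma \ref{lemma-sflconstruction} so that on each subinterval $[s_{i-1},s_i]$ the spectral projection $\chi_{[-a_i,a_i]}(T+sL)$ has constant finite rank, and then use analytic perturbation theory to track the finitely many eigenvalues of $T+sL$ lying in $[-a_i,a_i]$. The condition $L\geq 0$ combined with the Feynman--Hellmann derivative formula $\mu'(s)=\langle L\psi(s),\psi(s)\rangle$ implies that each such eigenvalue is a non-decreasing function of $s$, so zero-crossings can occur only from negative to positive; hence every local contribution in \eqref{def-sfl} is non-negative and, after concatenation via property (C), $\sfl(s\mapsto T+sL)\geq 0$, with the reversed sign in the case $L\leq 0$. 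Specialising this principle to $T=\mathcal{A}_0+\mathcal{K}_0$, $L=\mathcal{K}'_0-\mathcal{K}_0$ and to $T=\mathcal{A}_1+\mathcal{K}_1$, $L=\mathcal{K}'_1-\mathcal{K}_1$ respectively yields $\sfl(h(\cdot,0))\geq 0$ and $\sfl(h(\cdot,1))\leq 0$, which together with the displayed identity above completes the proof.
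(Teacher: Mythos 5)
Your reduction --- build the linear homotopy $h(s,\lambda)=\mathcal{A}_\lambda+(1-s)\mathcal{K}_\lambda+s\mathcal{K}'_\lambda$, apply Theorem~\ref{thm-hominv} to rewrite $\sfl(\mathcal{A}+\mathcal{K})-\sfl(\mathcal{A}+\mathcal{K}')=\sfl(h(\cdot,0))-\sfl(h(\cdot,1))$, and show that $\sfl(h(\cdot,0))\geq 0$ and $\sfl(h(\cdot,1))\leq 0$ --- is exactly the structure of the proof in the paper. Where you diverge is in the proof of the one-parameter monotonicity lemma. The paper invokes the crossing-form formula \eqref{equ-sflcrossings} from \cite{WaterstraatHomoclinics} (after \cite{Robbin}): at a crossing $s^\ast$ of $h(\cdot,0)$ the crossing form is $u\mapsto\langle(\mathcal{K}'_0-\mathcal{K}_0)u,u\rangle$ on $\ker h(s^\ast,0)$, and its sign controls the spectral flow. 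You instead observe that $s\mapsto T+sL$ with $L$ bounded and constant domain is a selfadjoint holomorphic family of type (A), track the finitely many eigenvalues in a strip $[-a_i,a_i]$ by Rellich's theorem, and use $\mu'(s)=\langle L\psi(s),\psi(s)\rangle\geq 0$ to conclude that each such eigenvalue is non-decreasing, so every local contribution to \eqref{def-sfl} is non-negative. Both routes reach the same conclusion, but yours sidesteps a point the paper leaves implicit: the hypotheses $\mathcal{K}'_0\geq\mathcal{K}_0$ and $\mathcal{K}_1\geq\mathcal{K}'_1$ are not strict, so the crossing forms are only positive (resp.\ negative) semi-definite and may well be degenerate, whereas formula \eqref{equ-sflcrossings} is stated for regular crossings; the paper's phrase ``positive definite by assumption'' quietly glosses over this. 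An analytic eigenvalue branch with $\mu'\geq 0$ is non-decreasing whether or not the derivative vanishes somewhere, so your argument handles the semi-definite case directly, at the cost of importing Kato--Rellich analytic perturbation theory in place of the crossing-form machinery.
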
 

\begin{proof}
We define a homotopy $h:I\times I\rightarrow\mathcal{CF}^\textup{sa}(H)$ by $h(s,\lambda)=\mathcal{A}_\lambda+(1-s)K_\lambda+s K'_\lambda$ and note that $h$ is continuous by Theorem \ref{Kato}. It follows from Theorem \ref{thm-hominv} that

\[\sfl(\mathcal{A}+\mathcal{K})=\sfl(h(\cdot,0))+\sfl(\mathcal{A}+\mathcal{K}')-\sfl(h(\cdot,1)),\]
and so the theorem is proved if we show that 

\begin{align}\label{sflneq0}
\sfl(h(\cdot,0))\geq 0\quad\text{and}\quad \sfl(h(\cdot,1))\leq 0.
\end{align}
To show \eqref{sflneq0}, we need to recall a method for computing the spectral flow that was introduced by Robbin and Salamon in \cite{Robbin} and generalised in \cite{WaterstraatHomoclinics} to the setting that is needed here. Assume that $W\subset H$ is a dense subset which is also a Hilbert space in its own right with a continuous embedding $W\hookrightarrow H$. Let $\mathcal{L}=\{\mathcal{L}_\lambda\}_{\lambda\in I}$ be a path in the normed space of bounded operators $\mathcal{B}(W,H)$, which we assume to be continuously differentiable with respect to the operator norm on the latter space. We also assume that each $\mathcal{L}_\lambda$ is selfadjoint and Fredholm when considered as operator on $H$ with the dense domain $\mathcal{D}(\mathcal{L}_\lambda)=W$. It follows from \cite[Prop. 2.2]{Lesch} that $\mathcal{L}$ is gap-continuous and thus the spectral flow of $\mathcal{L}$ is defined. A parameter value $\lambda^\ast$ is called a \textit{crossing} of $\mathcal{L}$ if $\ker\mathcal{L}_{\lambda^\ast}\neq \{0\}$, and the \textit{crossing form} of a crossing is the quadratic form defined by

\[\Gamma(\mathcal{L},\lambda^\ast)[u]=\langle \dot{\mathcal{L}}_{\lambda^\ast}u,u\rangle,\qquad u\in \ker\mathcal{L}_{\lambda^\ast},\]   
where $\dot{\mathcal{L}}_{\lambda^\ast}$ denotes the derivative of the path $\mathcal{L}$ at $\lambda=\lambda^\ast$. A crossing is called \textit{regular} if $\Gamma(\mathcal{L},\lambda^\ast)$ is non-degenerate. It was shown in \cite{WaterstraatHomoclinics} that, if $\mathcal{L}$ has only regular crossings, then there are only finitely many of them, and the spectral flow of $\mathcal{L}$ is given by

\begin{align}\label{equ-sflcrossings}
-m^-(\Gamma(\mathcal{L},0))+\sum_{\lambda\in(0,1)}{\sgn(\Gamma(\mathcal{L},\lambda))}+m^-(-\Gamma(\mathcal{L},1))
\end{align} 
where $m^-$ denotes the Morse index and $\sgn$ the signature of a quadratic form.\\ 
We now use crossing forms to show \eqref{sflneq0}. Let us first note that $h(\cdot,0)$ and $h(\cdot,1)$ are both differentiable paths of selfadjoint Fredholm operators as above, where the spaces $W$ are $\mathcal{D}(\mathcal{A}_0)$ and $\mathcal{D}(\mathcal{A}_1)$ with the graph norms of these operators. Therefore we can use \eqref{equ-sflcrossings} to compute their spectral flows. Let us consider $h(\cdot,0)$ first. If $s^\ast$ is a crossing of $h(\cdot,0)$, then the crossing form is

\[\Gamma(h(\cdot,0),s^\ast)[u]=\langle(\mathcal{K}'_0-\mathcal{K}_0)u,u\rangle,\quad u\in\ker(h(s^\ast,0)),\]
which is positive definite by assumption. Hence it follows from \eqref{equ-sflcrossings} that $\sfl(h(\cdot,0))\geq 0$. Let us now consider the other case and assume that $s^\ast$ is a crossing of $h(\cdot,1)$. Then the crossing form is

\[\Gamma(h(\cdot,1),s^\ast)[u]=\langle(\mathcal{K}'_1-\mathcal{K}_1)u,u\rangle,\quad u\in\ker(h(s^\ast,1)),\]
which is negative definite by assumption. It follows from \eqref{equ-sflcrossings} that $\sfl(h(\cdot,1))\leq 0$, and so the theorem is shown.
\end{proof}
\noindent



\subsection{Application: Estimating the Spectral Flow for Hamiltonian Systems}
The aim of this section is to apply the comparison theorem to boundary value problems of linear Hamiltonian systems of the form

\begin{equation}\label{Hamiltonian}
\left\{
\begin{aligned}
Ju'(t)+S_\lambda(t)&u(t)=0,\quad t\in I\\
u(0)\in \Lambda_1(\lambda)&,\, u(1)\in \Lambda_2(\lambda),
\end{aligned}
\right.
\end{equation} 
where

\begin{align}\label{J}
J=\begin{pmatrix}
0&-I_n\\
I_n&0
\end{pmatrix}
\end{align}
is the standard symplectic matrix, $\{S_\lambda(t)\}_{(\lambda,t)\in I\times I}$ is a family of symmetric $2n\times 2n$ matrices and $\Lambda_1$, $\Lambda_2$ are paths of Lagrangian subspaces of $\mathbb{R}^{2n}$. Let us recall that a subspace $L\subset\mathbb{R}^{2n}$ is called Lagrangian if $JL=L^\perp$, where the latter denotes the orthogonal complement with respect to the standard scalar product on $\mathbb{R}^{2n}$. The set of all Lagrangian subspaces of $\mathbb{R}^{2n}$ is a $\frac{1}{2}n(n+1)$-dimensional submanifold of the Grassmannian $G_n(\mathbb{R}^{2n})$ of all $n$-dimensional subspaces of $\mathbb{R}^{2n}$. Roughly speaking, the \textit{Maslov index} $\mu_{Mas}(\Lambda_1,\Lambda_2)$ of a pair of paths $(\Lambda_1,\Lambda_2):I\rightarrow\Lambda(n)\times\Lambda(n)$ is an integer-valued relative homotopy invariant that counts the dimensions of non-trivial intersections of $\Lambda_1(\lambda)$ and $\Lambda_2(\lambda)$ whilst the parameter $\lambda$ travels along the unit interval. As this invariant has been studied in numerous references, we will not provide further details and refer to \cite{Cappell} and \cite{MJN}. Let us note, however, that there are different non-equivalent approaches to the Maslov index and here we will always consider $\mu_{Mas}(\Lambda_1,\Lambda_2)$ as constructed in the previous references.\\
In what follows we write $S_\lambda\geq 0$ if $\langle S_\lambda(t) u,u\rangle\geq 0$ and $S_\lambda\leq 0$ if $\langle S_\lambda(t) u,u\rangle\leq 0$ for all $u\in\mathbb{R}^{2n}$ and all $t\in I$. The aim of this section is to prove the following theorem.

\begin{theorem}\label{thm-compHamiltonian}
If either

\begin{itemize}
 \item[(i)] $\mu_{Mas}(\Lambda_1,\Lambda_2)>0$, $S_0\leq 0$ and $S_1\geq 0$, or
 \item[(ii)] $\mu_{Mas}(\Lambda_1,\Lambda_2)<0$, $S_0\geq 0$ and $S_1\leq 0$, 
\end{itemize} 
then there are at least

\[\left\lceil{\frac{|\mu_{Mas}(\Lambda_1,\Lambda_2)|}{n}}\right\rceil\]
parameter values $\lambda$ for which \eqref{Hamiltonian} has a non-trivial solution.
\end{theorem}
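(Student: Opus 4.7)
The plan is to realise non-trivial solutions of \eqref{Hamiltonian} as kernels of a gap-continuous path of selfadjoint Fredholm operators, then invoke Theorem \ref{thm-comparison} together with the identification of the spectral flow with the Maslov index in the unperturbed case, and finally count crossings using the Lagrangian dimension bound.

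First, on $H=L^2(I,\mathbb{R}^{2n})$ I would set
\[
\mathcal{A}_\lambda u = Ju', \quad \mathcal{D}(\mathcal{A}_\lambda) = \{u\in H^1(I,\mathbb{R}^{2n}) : u(0)\in\Lambda_1(\lambda),\ u(1)\in\Lambda_2(\lambda)\},
\]
and let $\mathcal{K}_\lambda\in\mathcal{B}(H)$ be multiplication by $S_\lambda(\cdot)$, $\mathcal{K}'_\lambda=0$. A standard integration by parts, using that $\Lambda_i(\lambda)$ is Lagrangian, shows that each $\mathcal{A}_\lambda$ is selfadjoint with kernel isomorphic to $\Lambda_1(\lambda)\cap\Lambda_2(\lambda)$ (constants), hence $\mathcal{A}_\lambda\in\mathcal{CF}^\textup{sa}(H)$. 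The Rellich compact embedding $H^1\hookrightarrow L^2$ makes each $\mathcal{K}_\lambda$ relatively $\mathcal{A}_\lambda$-compact, and $\ker(\mathcal{A}_\lambda+\mathcal{K}_\lambda)$ is precisely the solution space of \eqref{Hamiltonian}.

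To apply Theorem \ref{thm-comparison} I need gap-continuity of $\lambda\mapsto\mathcal{A}_\lambda$, which is non-trivial because the domain moves with $\lambda$. I would handle this by choosing a continuous path $\Phi_\lambda\in C^1(I,\Sp(2n,\mathbb{R}))$ with $\Phi_\lambda(0)\Lambda_1(0)=\Lambda_1(\lambda)$ and $\Phi_\lambda(1)\Lambda_2(0)=\Lambda_2(\lambda)$, then transforming via $u=\Phi_\lambda v$; this converts $\mathcal{A}_\lambda$ into $\widetilde{\mathcal{A}}_\lambda v = Jv' + R_\lambda v$ on the \emph{fixed} domain determined by $\Lambda_1(0),\Lambda_2(0)$, where $R_\lambda\in\mathcal{B}(H)$ depends continuously on $\lambda$, so Theorem \ref{Kato} yields gap-continuity. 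Under hypothesis (i), the assumptions $S_0\leq 0$ and $S_1\geq 0$ read exactly as $\mathcal{K}'_0\geq \mathcal{K}_0$ and $\mathcal{K}_1\geq \mathcal{K}'_1$, and Theorem \ref{thm-comparison} gives $\sfl(\mathcal{A}+\mathcal{K})\geq \sfl(\mathcal{A})$. The remaining identity $\sfl(\mathcal{A})=\mu_{Mas}(\Lambda_1,\Lambda_2)$ I would establish by applying the Robbin--Salamon crossing form formula \eqref{equ-sflcrossings} to the trivialised path $\widetilde{\mathcal{A}}_\lambda\in\mathcal{B}(W,H)$ and matching its crossing form with the standard Maslov crossing form of $(\Lambda_1,\Lambda_2)$, following \cite{Robbin}. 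Case (ii) is symmetric and produces $\sfl(\mathcal{A}+\mathcal{K})\leq \mu_{Mas}(\Lambda_1,\Lambda_2)<0$.

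Finally, because the boundary conditions are Lagrangian, any element of $\ker(\mathcal{A}_\lambda+\mathcal{K}_\lambda)$ is determined by its value at $0$, which lies in $\Lambda_1(\lambda)$, so $\dim\ker(\mathcal{A}_\lambda+\mathcal{K}_\lambda)\leq n$. The spectral flow splits as a sum over crossings of local contributions of absolute value at most the local kernel dimension, so
\[
n\cdot\#\{\lambda\in I : \eqref{Hamiltonian}\ \text{has a non-trivial solution}\} \geq |\sfl(\mathcal{A}+\mathcal{K})| \geq |\mu_{Mas}(\Lambda_1,\Lambda_2)|,
\]
and rounding the integer count up gives the claim. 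I expect the main obstacle to be the combined step of trivialising the moving Lagrangian boundary conditions and then identifying $\sfl(\mathcal{A})$ with $\mu_{Mas}(\Lambda_1,\Lambda_2)$ by a careful crossing-form computation; once these are in place, the rest is a clean application of the comparison principle.
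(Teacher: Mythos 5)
Your proposal follows essentially the same approach as the paper: set up the path $\mathcal{A}_\lambda+\mathcal{K}_\lambda$ on $L^2(I,\mathbb{R}^{2n})$, bound $|\sfl(\mathcal{A}+\mathcal{K})|$ from below via Theorem \ref{thm-comparison} together with $\sfl(\mathcal{A})=\mu_{Mas}(\Lambda_1,\Lambda_2)$, and from above by $n$ times the number of parameter values with non-trivial kernel. The only divergence is expository: where the paper cites \cite{Cappell} and \cite{MJN} for gap-continuity of $\{\mathcal{A}_\lambda\}$ and for $\sfl(\mathcal{A})=\mu_{Mas}(\Lambda_1,\Lambda_2)$, and isolates the local counting estimate $|\sfl(\mathcal{A}+\mathcal{K}\mid_{I_i})|\leq\dim\ker(\mathcal{A}_{\lambda_i}+\mathcal{K}_{\lambda_i})$ as Lemma \ref{lemma-sfltechnical}, you propose to re-derive the former two by trivialising the moving boundary conditions (for which you should take orthogonal symplectic $\Phi_\lambda(t)$ so the transformation is unitary on $L^2$) and to assert the latter without proof.
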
 
\noindent
We will obtain Theorem \ref{thm-compHamiltonian} from Theorem \ref{thm-comparison} in the remainder of this section. Let us consider on $H=L^2(I,\mathbb{R}^{2n})$ the differential operators 

\[\mathcal{A}_\lambda u=Ju'\]
on the domains

\[\mathcal{D}(\mathcal{A}_\lambda)=\{u\in H^1(I,\mathbb{R}^{2n}):\, u(0)\in\Lambda_1(\lambda),\, u(1)\in\Lambda_2(\lambda)\}.\]
It is well known that $\mathcal{A}_\lambda$ are selfadjoint Fredholm operators (see, e.g., \cite{Cappell}). Moreover, the path $\mathcal{A}=\{\mathcal{A}_\lambda\}_{\lambda\in I}$ is gap-continuous by \cite[Thm. 1.1]{MJN}. As the embedding $H^1(I,\mathbb{R}^{2n})\hookrightarrow L^2(I,\mathbb{R}^{2n})$ is compact by Rellich's Theorem, it follows that the multiplication operators

\[(\mathcal{K}_\lambda u)(t)=S_\lambda(t)u(t)\]
define a path of operators in $\mathcal{B}^\textup{sa}(H)$ such that $\mathcal{K}_\lambda$ is $\mathcal{A}_\lambda$-compact. Hence the spectral flow of $\mathcal{A}+\mathcal{K}$ is defined. Note that a non-vanishing spectral flow of $\mathcal{A}+\mathcal{K}$ implies that there is some $\lambda\in I$ for which \eqref{Hamiltonian} has a non-trivial solution.\\
It was shown by Cappell, Lee and Miller in \cite{Cappell} (see also \cite[Thm. 1.1]{MJN}) that

\[\sfl(\mathcal{A})=\mu_{Mas}(\Lambda_1,\Lambda_2).\]
As either $\mathcal{K}_0\geq 0$, $\mathcal{K}_1\leq 0$ and $\mu_{Mas}(\Lambda_1,\Lambda_2)<0$, or $\mathcal{K}_0\leq 0$, $\mathcal{K}_1\geq 0$ and $\mu_{Mas}(\Lambda_1,\Lambda_2)>0$ by the assumption of Theorem \ref{thm-compHamiltonian}, it follows from Theorem \ref{thm-comparison} that 
\begin{align}\label{equ-sflcomp}
|\sfl(\mathcal{A}+\mathcal{K})|\geq|\sfl(\mathcal{A})|=|\mu_{Mas}(\Lambda_1,\Lambda_2)|.
\end{align}
To finish the proof of Theorem \ref{thm-compHamiltonian}, we need the following lemma about the spectral flow.

\begin{lemma}\label{lemma-sfltechnical}
If $\lambda=\lambda^\ast\in(0,1)$ is the only parameter $\lambda$ for which $\ker(\mathcal{A}_\lambda+\mathcal{K}_\lambda)\neq\{0\}$, then

\[|\sfl(\mathcal{A}+\mathcal{K})|\leq\dim\ker(\mathcal{A}_{\lambda^\ast}+\mathcal{K}_{\lambda^\ast}).\]  
\end{lemma}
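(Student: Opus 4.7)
The plan is to localise the computation of $\sfl(\mathcal{A}+\mathcal{K})$ around the unique crossing $\lambda^\ast$, kill the contributions away from it by invertibility, and then read off the remaining contribution directly from the definition \eqref{def-sfl}. Set $k:=\dim\ker(\mathcal{A}_{\lambda^\ast}+\mathcal{K}_{\lambda^\ast})$. The key tools will be the concatenation property (C), the vanishing property (Z), and the same spectral projection argument that drives Lemma \ref{lemma-kernelconstant}.

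First I would apply Lemma \ref{lemma-sflconstruction} at $\mathcal{A}_{\lambda^\ast}+\mathcal{K}_{\lambda^\ast}$ with a sufficiently small $\varepsilon>0$ to arrange that the rank of $\chi_{[-\varepsilon,\varepsilon]}(\mathcal{A}_{\lambda^\ast}+\mathcal{K}_{\lambda^\ast})$ equals $k$, and that the spectral projection $\chi_{[-\varepsilon,\varepsilon]}$ varies continuously on an open neighbourhood $N$ of $\mathcal{A}_{\lambda^\ast}+\mathcal{K}_{\lambda^\ast}$. By gap-continuity of $\mathcal{A}+\mathcal{K}$, there is $\delta>0$ such that $\mathcal{A}_\lambda+\mathcal{K}_\lambda\in N$ for every $\lambda\in[\lambda^\ast-\delta,\lambda^\ast+\delta]$; invoking the rank-stability for close projections (\cite[Lem.~II.4.3]{Gohberg}) as in the proof of Lemma \ref{lemma-kernelconstant}, the rank of $\chi_{[-\varepsilon,\varepsilon]}(\mathcal{A}_\lambda+\mathcal{K}_\lambda)$ stays equal to $k$ on this interval. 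Since $\lambda^\ast$ is by hypothesis the only parameter with non-trivial kernel, the paths $(\mathcal{A}+\mathcal{K})\mid_{[0,\lambda^\ast-\delta]}$ and $(\mathcal{A}+\mathcal{K})\mid_{[\lambda^\ast+\delta,1]}$ consist of invertible operators, so (Z) forces their spectral flows to vanish and (C) then yields
\[
\sfl(\mathcal{A}+\mathcal{K})=\sfl\bigl((\mathcal{A}+\mathcal{K})\mid_{[\lambda^\ast-\delta,\lambda^\ast+\delta]}\bigr).
\]

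On the sub-interval $[\lambda^\ast-\delta,\lambda^\ast+\delta]$ the trivial partition together with the number $\varepsilon$ meets the requirements used in the construction of the spectral flow, so \eqref{def-sfl} reduces to the single difference
\[
\sfl\bigl((\mathcal{A}+\mathcal{K})\mid_{[\lambda^\ast-\delta,\lambda^\ast+\delta]}\bigr)=\dim\im\chi_{[0,\varepsilon]}(\mathcal{A}_{\lambda^\ast+\delta}+\mathcal{K}_{\lambda^\ast+\delta})-\dim\im\chi_{[0,\varepsilon]}(\mathcal{A}_{\lambda^\ast-\delta}+\mathcal{K}_{\lambda^\ast-\delta}).
\]
By the functional calculus $\chi_{[0,\varepsilon]}(T)\leq\chi_{[-\varepsilon,\varepsilon]}(T)$, so each term on the right is a non-negative integer bounded above by $k$, and consequently the absolute value of their difference is at most $k$, which is precisely the claimed inequality.

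I do not expect a serious obstacle: the proof is essentially a direct unpacking of \eqref{def-sfl} combined with the two elementary properties (C) and (Z) and the projection-rank-constancy trick already used in Lemma \ref{lemma-kernelconstant}. The only point that deserves a sentence of care is the constancy of $\dim\im\chi_{[-\varepsilon,\varepsilon]}(\mathcal{A}_\lambda+\mathcal{K}_\lambda)=k$ on the whole window $[\lambda^\ast-\delta,\lambda^\ast+\delta]$, which is what guarantees the uniform bound for the two non-negative integers being subtracted.
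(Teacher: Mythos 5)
Your proof is correct and follows essentially the same route as the paper: localise to a small interval $[\lambda^\ast-\delta,\lambda^\ast+\delta]$ via (C) and (Z), express the remaining spectral flow as a single difference of ranks of $\chi_{[0,\varepsilon]}$ via \eqref{def-sfl}, and bound that difference by $\dim\im\chi_{[-\varepsilon,\varepsilon]}=\dim\ker(\mathcal{A}_{\lambda^\ast}+\mathcal{K}_{\lambda^\ast})$. The only difference is stylistic: you spell out (via Lemma~\ref{lemma-sflconstruction} and the projection-rank stability) why such $\varepsilon,\delta$ exist, whereas the paper takes this for granted.
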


\begin{proof}
Let $\varepsilon>0$ and $\delta>0$ be such that 

\[\dim\im\chi_{[-\varepsilon,\varepsilon]}(\mathcal{A}_\lambda+\mathcal{K}_\lambda)=\dim\ker(\mathcal{A}_{\lambda^\ast}+\mathcal{K}_{\lambda^\ast}),\quad |\lambda-\lambda^\ast|\leq\delta.\]
Then by (C), (Z) and the definition of the spectral flow \eqref{def-sfl}

\begin{align*}
|\sfl(\mathcal{A}+\mathcal{K})|&=|\sfl(\mathcal{A}+\mathcal{K}\mid_{[\lambda^\ast-\delta,\lambda^\ast+\delta]})|\\
&=|\dim\im\chi_{[0,\varepsilon]}(\mathcal{A}_{\lambda^\ast+\delta}+\mathcal{K}_{\lambda^\ast+\delta})-\dim\im\chi_{[0,\varepsilon]}(\mathcal{A}_{\lambda^\ast-\delta}+\mathcal{K}_{\lambda^\ast-\delta})|\\
&\leq \max\{\dim\im\chi_{[0,\varepsilon]}(\mathcal{A}_{\lambda^\ast+\delta}+\mathcal{K}_{\lambda^\ast+\delta}),\dim\im\chi_{[0,\varepsilon]}(\mathcal{A}_{\lambda^\ast-\delta}+\mathcal{K}_{\lambda^\ast-\delta})\}\\
&\leq \max\{\dim\im\chi_{[-\varepsilon,\varepsilon]}(\mathcal{A}_{\lambda^\ast+\delta}+\mathcal{K}_{\lambda^\ast+\delta}),\dim\im\chi_{[-\varepsilon,\varepsilon]}(\mathcal{A}_{\lambda^\ast-\delta}+\mathcal{K}_{\lambda^\ast-\delta})\}\\
&= \dim\ker(\mathcal{A}_{\lambda^\ast}+\mathcal{K}_{\lambda^\ast}),
\end{align*}
where we have used that $|a-b|\leq\max\{a,b\}$ if $a,b\geq 0$.
\end{proof}
\noindent
To prove Theorem \ref{thm-compHamiltonian}, we can assume that there are only finitely many $0\leq\lambda_1<\ldots<\lambda_N\leq 1$ such that $\ker(\mathcal{A}_{\lambda_i}+\mathcal{K}_{\lambda_i})\neq\{0\}$. Let $\varepsilon>0$ be sufficiently small such that $\lambda_i$ is the only of these values in the interval $I_i:=[\lambda_i-\varepsilon,\lambda_i+\varepsilon]\cap I$ for $i=1,\ldots,N$. It follows from (C) that 

\[\sfl(\mathcal{A}+\mathcal{K})=\sum^N_{i=1}{\sfl(\mathcal{A}+\mathcal{K}\mid_{I_i})},\]
and consequently by Lemma \ref{lemma-sfltechnical}

\[|\sfl(\mathcal{A}+\mathcal{K})|\leq \sum^N_{i=1}{|\sfl(\mathcal{A}+\mathcal{K}\mid_{I_i})|}\leq\sum^N_{i=1}{\dim\ker(\mathcal{A}_{\lambda_i}+\mathcal{K}_{\lambda_i})}.\]
Now $\ker(\mathcal{A}_{\lambda}+\mathcal{K}_{\lambda})$ is the space of solutions of \eqref{Hamiltonian} and this space has at most dimension $n$ under the given boundary conditions. Hence 

\[|\sfl(\mathcal{A}+\mathcal{K})|\leq \sum^N_{i=1}{\dim\ker(\mathcal{A}_{\lambda_i}+\mathcal{K}_{\lambda_i})}\leq Nn, \]
or, in other words,

\[N\geq \frac{|\sfl(\mathcal{A}+\mathcal{K})|}{n}\geq\frac{|\mu_{Mas}(\Lambda_1,\Lambda_2)|}{n},\]
where we have used \eqref{equ-sflcomp}. As $N$ is the number of parameter values $\lambda$ for which $\ker(\mathcal{A}_{\lambda}+\mathcal{K}_{\lambda})\neq\{0\}$, this shows Theorem \ref{thm-compHamiltonian}.


\section{On the Uniqueness of the Spectral Flow}\label{section-uniqueness}

\subsection{The Uniqueness for Admissible Paths}
In this section we recall Lesch's uniqueness theorem for the spectral flow from \cite{Lesch}. Let $H$ be an infinite dimensional separable complex Hilbert space and denote by $\mathcal{BF}^\textup{sa}(H)$ the set of all bounded selfadjoint Fredholm operators on $H$ with the norm topology. Atiyah and Singer showed in \cite{AtiyahSinger} that $\mathcal{BF}^\textup{sa}(H)$ consists of three path-components

\[\mathcal{BF}^\textup{sa}(H)=\mathcal{BF}^\textup{sa}_+(H)\cup \mathcal{BF}^\textup{sa}_-(H)\cup \mathcal{BF}^\textup{sa}_\ast(H).\]  
An operator $T\in \mathcal{BF}^\textup{sa}(H)$ is in $\mathcal{BF}^\textup{sa}_\pm(H)$ if its essential spectrum is contained in the positive or the negative half-line, respectively. The complement of these sets, above denoted by $\mathcal{BF}^\textup{sa}_\ast(H)$, are those operators which have positive and negative essential spectrum. The two components $\mathcal{BF}^\textup{sa}_\pm(H)$ are topologically trivial, whereas $\mathcal{BF}^\textup{sa}_\ast(H)$ is a classifying space for the odd $K$-theory functor. Hence any homotopy invariant for paths on $\mathcal{BF}^\textup{sa}_\pm(H)$ can only depend on the endpoints of the paths. On the other hand, $\mathcal{BF}^\textup{sa}_\ast(H)$ has an infinitely cyclic fundamental group and Atiyah, Patodi and Singer constructed in \cite{APS} the spectral flow as an isomorphism between $\pi_1(\mathcal{BF}^\textup{sa}_\ast(H))$ and the integers. Later Phillips gave in \cite{Phillips} an analytic definition for general non-closed paths in $\mathcal{BF}^\textup{sa}_\ast(H)$ by using the formula \eqref{def-sfl} that was later generalised to gap-continuous paths in \cite{UnbSpecFlow}.\\
There is a straightforward way to extend the definition of the spectral flow from bounded to unbounded operators by using the \textit{Riesz transform} 

\begin{align}\label{Riesz}
F:\mathcal{CF}^\textup{sa}(H)\rightarrow\mathcal{BF}^\textup{sa}(H),\qquad F(T)=T(I_H+T^2)^{-\frac{1}{2}}.
\end{align}    
If we require that $F$ is an isometric embedding, then we obtain a metric $d_R$ on $\mathcal{CF}^\textup{sa}(H)$ which is called the \textit{Riesz metric}. Now every path that is continuous with respect to this metric has a spectral flow defined by the spectral flow of the Riesz transform of the path. It was shown by Nicolaescu in \cite{Nicolaescu} that the topology induced by the gap-metric $d_G$ is weaker than the one induced by the Riesz-metric $d_R$. Lesch proved in \cite[Thm. 5.10]{Lesch} that the natural inclusion $\mathcal{BF}^\textup{sa}(H)\hookrightarrow (\mathcal{CF}^\textup{sa}(H),d_R)$ is a homotopy equivalence. Hence $(\mathcal{CF}^\textup{sa}(H),d_R)$ consists of three path components

\begin{align}\label{componentsRiesz}
(\mathcal{CF}^\textup{sa}(H),d_R)=\mathcal{CF}^\textup{sa}_+(H)\cup \mathcal{CF}^\textup{sa}(H)_-\cup \mathcal{CF}^\textup{sa}(H)_\ast
\end{align} 
of which $\mathcal{CF}^\textup{sa}(H)_\pm$ are topologically trivial whereas $\mathcal{CF}^\textup{sa}_\ast(H)$ has an infinitely cyclic fundamental group.\\
In the previous sections we only considered $\mathcal{CF}^\textup{sa}(H)$ with the gap metric $d_G$. As this metric is weaker than $d_R$, we see that it is a weaker assumption to require that a path is continuous with respect to $d_G$. Lesch proved in \cite[Prop. 5.8]{Lesch} the remarkable result that $(\mathcal{CF}^\textup{sa}(H),d_G)$ is path-connected, and so there is no division into paths components as in \eqref{componentsRiesz}. Joachim showed in \cite{Joachim} that the whole space $(\mathcal{CF}^\textup{sa}(H),d_G)$ is a classifying space for the odd K-theory functor. In particular, the fundamental group $\pi_1(\mathcal{CF}^\textup{sa}(H))$ is infinitely cyclic.\\
Let now $G\mathcal{C}^\textup{sa}(H)$ denote the set of all invertible elements of $\mathcal{C}^\textup{sa}(H)$, and $G\mathcal{B}^\textbf{sa}(H)$ be the set of all invertible elements of $\mathcal{B}^\textup{sa}(H)$. In what follows we want to treat all the above topologically non-trivial operator sets simultanously. Therefore, we let the topological space pair $(X,Y)$ be either $(\mathcal{CF}^\textup{sa}(H),G\mathcal{C}^\textup{sa}(H))$ with the gap topology, or $(\mathcal{CF}^\textup{sa}_\ast(H),G\mathcal{C}^\textup{sa}(H)\cap \mathcal{CF}^\textup{sa}_\ast(H))$ with the Riesz topology, or $(\mathcal{BF}^\textup{sa}_\ast(H),G\mathcal{B}^\textup{sa}(H)\cap \mathcal{BF}^\textup{sa}_\ast(H))$ with the norm topology. We first note for later reference the following important theorem.

\begin{theorem}\label{thm-gap-pathconnected}
The set $Y$ is path-connected.
\end{theorem}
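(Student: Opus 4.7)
The plan is to treat the three cases in turn, in each reducing path-connectedness of $Y$ to a classical input, most notably Kuiper's theorem on the path-connectedness of the unitary group $U(H)$.

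For case (3), where $Y=G\mathcal{B}^\textup{sa}(H)\cap\mathcal{BF}^\textup{sa}_\ast(H)$ carries the norm topology, I would first observe that any $T\in Y$ is bounded invertible with essential spectrum on both sides of $0$, so its spectral projections $P_\pm=\chi_{(0,\pm\infty)}(T)$ both have infinite-dimensional range. The functional-calculus path $T_s=f_s(T)$ with $f_s(t)=(1-s)t+s\sgn(t)$ stays in $Y$ and ends at the symmetry $J_T=P_+-P_-$. Two such symmetries then correspond to orthogonal decompositions of $H$ into two infinite-dimensional subspaces, and by Kuiper's theorem the unitary group $U(H)$ is path-connected, so one may intertwine one decomposition with another along a norm-continuous path of unitaries, transporting the corresponding symmetry through $Y$.

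For case (2), I would use the Riesz transform $F(T)=T(I+T^2)^{-1/2}$, which by definition is an isometric embedding of $(\mathcal{CF}^\textup{sa}_\ast(H),d_R)$ into $(\mathcal{BF}^\textup{sa}_\ast(H),\|\cdot\|)$. A short check shows that $F$ sends invertible operators to invertible operators, hence restricts to a continuous injection from $Y$ into case (3)'s $Y$. Path-connectedness in case (2) then reduces to performing the symmetry-conjugation argument of case (3) inside the image of $F$, which one verifies is stable under the paths constructed there.

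Case (1), the gap-topology setting with $Y=G\mathcal{C}^\textup{sa}(H)$, is the main obstacle: this set contains operators such as $I$ and $-I$, which lie in different path-components of $G\mathcal{B}^\textup{sa}(H)$ with norm topology, so the symmetry argument of case (3) cannot apply directly. My plan is to apply the Cayley transform $C(T)=(T-iI)(T+iI)^{-1}$, which gives a homeomorphism from $(\mathcal{C}^\textup{sa}(H),d_G)$ onto $\{U\in U(H):\ker(I-U)=0\}$ with the norm topology, and which restricts to a bijection from $G\mathcal{C}^\textup{sa}(H)$ onto the unitaries $U$ with $\ker(I-U)=0$ and $-1\notin\sigma(U)$. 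Parameterizing such $U$ as $e^{iA}$ with $A$ self-adjoint, $\|A\|<\pi$, and $\ker A=0$, I would construct an explicit norm-continuous path in this set, for instance using multiplication operators $M_{g_s}$ on $L^2([-1,1])$ with $g_s$ jointly continuous in $(s,x)$ and never identically zero on a subinterval, thereby pulling back under $C$ to a gap-continuous path joining $I$ and $-I$ inside $Y$. Combining this with the symmetry-conjugation argument of case (3) applied to the bounded portion of $Y$ then yields path-connectedness of all of $G\mathcal{C}^\textup{sa}(H)$.
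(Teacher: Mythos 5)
Your treatment of case (3) is correct and is essentially the ``easy exercise'' the paper alludes to: retract via $f_s(t)=(1-s)t+s\sgn(t)$ to the symmetry $J_T=P_+-P_-$, then conjugate one symmetry into another along a Kuiper path, staying in $Y$ throughout because the $\pm 1$-eigenspaces remain infinite-dimensional.

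The other two cases have genuine gaps, and in both the paper takes a different (shorter) route by citing \cite{Lesch}. For case (2), your reduction hinges on the claim that the case-(3) paths stay inside $\im(F)$, but this is false: $\im(F)$ consists of selfadjoint contractions for which $\pm 1$ are not eigenvalues, whereas the endpoint $J_T$ of your retraction has $\pm 1$ as eigenvalues of infinite multiplicity, and the entire Kuiper segment $U_sJ_TU_s^*$ therefore lies outside $\im(F)$. Nothing pulls back to a $d_R$-continuous path in $\mathcal{CF}^\textup{sa}_\ast(H)$. The paper instead invokes Lesch's theorem that $\mathcal{BF}^\textup{sa}(H)\hookrightarrow(\mathcal{CF}^\textup{sa}(H),d_R)$ is a homotopy equivalence, which is exactly the tool needed to transfer $\pi_0$-information from the bounded picture; your proposal replaces this by an argument that does not close.

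For case (1), the Cayley-transform reformulation and the idea of a gap-continuous detour from $I$ to $-I$ through unbounded operators are sound in spirit, and this is indeed the mechanism that makes $G\mathcal{C}^\textup{sa}(H)$ connected although $G\mathcal{B}^\textup{sa}(H)$ is not. But the concluding sentence, ``combining this with the symmetry-conjugation argument of case (3) applied to the bounded portion of $Y$,'' does not yield path-connectedness. In the norm (equivalently gap) topology, $G\mathcal{B}^\textup{sa}(H)$ splits into infinitely many components indexed by $(\dim\im P_+,\dim\im P_-)\in(\mathbb{N}\cup\{\infty\})^2$; the retraction-plus-Kuiper argument never leaves such a component, and your single $I\leftrightarrow -I$ detour only identifies the $(\infty,0)$ and $(0,\infty)$ components. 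It remains to explain how an arbitrary, possibly unbounded, $T\in Y$ is joined to the bounded portion at all, and how, say, the $(3,\infty)$ component is joined to the others; both require further unbounded detours the sketch does not supply. (A minor point: for $\ker M_{g_s}=\{0\}$ one needs $g_s\neq 0$ almost everywhere, which is strictly stronger than ``never identically zero on a subinterval,'' since a continuous function can vanish on a nowhere-dense set of positive measure.) The paper sidesteps all of case (1) by citing \cite[Prop.~5.8]{Lesch} directly.
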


\begin{proof}
This was proved in \cite[Prop. 5.8]{Lesch} for $G\mathcal{C}^\textup{sa}(H)$ with the gap topology, and it is an easy exercise for $G\mathcal{B}^\textup{sa}(H)\cap \mathcal{BF}^\textup{sa}_\ast(H)$ with the norm topology. The remaining case then follows from the already mentioned fact that the inclusion $\mathcal{BF}^\textup{sa}(H)\hookrightarrow(\mathcal{CF}^\textup{sa}(H),d_R)$ is a homotopy equivalence \cite[Thm. 5.10]{Lesch}.
\end{proof}
\noindent
In what follows $\Omega(X)$ stands for the set of all paths in $X$ and $\Omega(X,Y)$ denotes the set of those elements in $\Omega(X)$ which have endpoints in $Y$. We will below consider maps $\mu:\Omega(X)\rightarrow\mathbb{Z}$ or $\mu:\Omega(X,Y)\rightarrow\mathbb{Z}$ with the following properties:

\begin{itemize}
 \item[(Z)] If $\mathcal{A}=\{\mathcal{A}_\lambda\}_{\lambda\in I}\in\Omega(X)$ is such that $\mathcal{A}_\lambda\in Y$ for all $\lambda\in I$, then $\mu(\mathcal{A})=0$. 
 \item[(C)] If $\mathcal{A}^1$ and $\mathcal{A}^2$ are in $\Omega(X)$ such that $\mathcal{A}^1_1=\mathcal{A}^2_0$, then 
 \[\mu(\mathcal{A}^1\ast\mathcal{A}^2)=\mu(\mathcal{A}^1)+\mu(\mathcal{A}^2).\]
 \item[(H)] Let $h:I\times I\rightarrow X$ be a homotopy of selfadjoint Fredholm operators such that $h(\lambda,0)$ and $h(\lambda,1)$ are constant for all $\lambda\in I$. Then 
\[\mu(h(0,\cdot))=\mu(h(1,\cdot)).\]
 \item[(HI)] Let $h:I\times I\rightarrow X$ be a homotopy of selfadjoint Fredholm operators such that $h(\lambda,0), h(\lambda,1)\in Y$ for all $\lambda\in I$. Then 
\[\mu(h(0,\cdot))=\mu(h(1,\cdot)).\]
\end{itemize}
Note that we have seen in Section \ref{section-sfldef} and Section \ref{section-hominv} that the spectral flow has all these properties. Before we state Lesch's Theorem, let us introduce a further property.

\begin{itemize}
 \item[(NP)] There is a bounded selfadjoint operator $T_0$ such that $\sigma(T_0)=\sigma_{ess}(T_0)=\{-1,1\}$, and a rank one orthogonal projection $P$ such that $(I-P)T_0(I-P):\ker(P)\rightarrow\ker(P)$ is invertible and
 
 \[\mu(\mathcal{A}^{NP})=1,\]
where $\mathcal{A}^{NP}_\lambda=(\lambda-\frac{1}{2})P+(I-P)T_0(I-P):H\rightarrow H$.   
\end{itemize}
Note that (NP) is satisfied by the spectral fow. Indeed, we let $P_+, P_-$ and $P_0$ be three complementary orthogonal projections such that $P_+$ and $P_-$ have infinite dimensional kernel and image, $\dim(\im P_0)=1$ as well as $P_++P_-+P_0=I_H$. Then we have for $T_0=P_0+P_+-P_-$ and $P=P_0$

\[(\lambda-\frac{1}{2})P+(I-P)T_0(I-P)=(\lambda-\frac{1}{2})P_0+P_+-P_-,\] 
and it is readily seen from \eqref{def-sfl} that this path has a spectral flow of 1.\\
Now Lesch's Uniqueness Theorem reads as follows.

\begin{theorem}\label{Lesch}
Every map 

\[\mu:\Omega(X,Y)\rightarrow\mathbb{Z}\]
that satisfies (C), (HI) and (NP) is the spectral flow \eqref{def-sfl}.
\end{theorem}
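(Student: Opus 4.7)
The plan is to show that the difference $\nu:=\mu-\sfl$ vanishes identically on $\Omega(X,Y)$. Since $\sfl$ itself satisfies (C), (HI) and (NP) --- the first two by the results of Sections \ref{section-sfldef} and \ref{section-hominv}, and the third by the direct computation made just before the statement --- the map $\nu$ satisfies (C), (HI) and $\nu(\mathcal{A}^{NP})=0$, so the task reduces to deducing $\nu\equiv 0$ from these three facts. My approach breaks this into a vanishing step for paths in $Y$, a reduction to based loops at a chosen point $T_0\in Y$, and finally the identification of the generator of $\pi_1(X,T_0)$.

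First I would promote $\nu$ to satisfy the vanishing axiom (Z). A constant path $c$ has $\nu(c)=0$ because (C) gives $\nu(c)=\nu(c\ast c)=2\nu(c)$. For an arbitrary path $\mathcal{A}\colon I\to Y$ lying entirely in $Y$, the homotopy $h(s,t)=\mathcal{A}(s+(1-s)t)$ satisfies $h(s,0)=\mathcal{A}(s)\in Y$, $h(s,1)=\mathcal{A}(1)\in Y$ for every $s$, with $h(0,\cdot)=\mathcal{A}$ and $h(1,\cdot)$ the constant path at $\mathcal{A}(1)$, so (HI) yields $\nu(\mathcal{A})=0$.

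Second, using the path-connectedness of $Y$ from Theorem \ref{thm-gap-pathconnected}, I would fix a basepoint $T_0\in Y$ and transfer everything to based loops. For any $\mathcal{A}\in\Omega(X,Y)$, choose paths $\alpha,\beta$ in $Y$ from $T_0$ to $\mathcal{A}_0$ and from $\mathcal{A}_1$ to $T_0$; then the loop $\widetilde{\mathcal{A}}:=\alpha\ast\mathcal{A}\ast\beta$ at $T_0$ satisfies $\nu(\widetilde{\mathcal{A}})=\nu(\mathcal{A})$ by (C) combined with the vanishing on $Y$-valued paths just established. Applying the same construction to $\mathcal{A}^{NP}$ (whose endpoints lie in $Y$ by inspection) yields a based loop $\widetilde{\mathcal{A}^{NP}}$ with $\nu(\widetilde{\mathcal{A}^{NP}})=0$ and, using property (Z) for $\sfl$, with $\sfl(\widetilde{\mathcal{A}^{NP}})=\sfl(\mathcal{A}^{NP})=1$.

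Third, I would invoke the topological input recorded earlier in the excerpt. The axiom (HI) restricted to homotopies satisfying $h(s,0)=h(s,1)=T_0$ is precisely the statement that $\nu$ descends to a function $\tilde\nu\colon\pi_1(X,T_0)\to\mathbb{Z}$; by (C) this function is a group homomorphism. In each of the three cases for $(X,Y)$ the fundamental group $\pi_1(X,T_0)$ is infinite cyclic and the spectral flow furnishes an isomorphism $\sfl_\ast\colon\pi_1(X,T_0)\xrightarrow{\cong}\mathbb{Z}$, as summarized in Section \ref{section-uniqueness}. Since $\sfl_\ast[\widetilde{\mathcal{A}^{NP}}]=1$, the class $[\widetilde{\mathcal{A}^{NP}}]$ generates $\pi_1(X,T_0)$, and the homomorphism $\tilde\nu$ vanishes on this generator; thus $\tilde\nu\equiv 0$, whence $\nu\equiv 0$. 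The main obstacle is this last topological input, namely that $\sfl$ induces an isomorphism on $\pi_1$, which relies on the non-trivial computation of the homotopy type of $X$ in each of the three settings; all other steps are elementary consequences of the axioms together with the path-connectedness of $Y$.
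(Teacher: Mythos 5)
The paper does not prove Theorem \ref{Lesch}; it is stated and attributed to \cite{Lesch}, so there is no in-paper proof to compare against. That said, your argument is correct, and it closely parallels the strategy the paper later uses for its own Theorem \ref{thm-uniqueness}: use Theorem \ref{thm-gap-pathconnected} to append $Y$-valued tails and reduce to based loops at some $T_0\in Y$, observe that (C) together with the homotopy axiom makes the invariant descend to a group homomorphism on $\pi_1(X,T_0)\cong\mathbb{Z}$, and then pin the homomorphism down on a generator. Your steps are all sound: the constant-path computation and the homotopy $h(s,t)=\mathcal{A}(s+(1-s)t)$ correctly derive (Z) from (HI) and (C); the endpoints of $\mathcal{A}^{NP}$ are indeed in $Y$ because $(I-P)T_0(I-P)$ is assumed invertible on $\ker P$ and $\lambda-\tfrac12\neq 0$ at $\lambda\in\{0,1\}$; and the algebra showing that $\sfl_\ast[\widetilde{\mathcal{A}^{NP}}]=1$ forces $[\widetilde{\mathcal{A}^{NP}}]$ to generate an infinite cyclic group is correct (if $[\widetilde{\mathcal{A}^{NP}}]=k g_0$ with $g_0$ a generator, then $k\cdot\sfl_\ast(g_0)=1$ forces $k=\pm 1$). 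The main difference from the paper's proof of Theorem \ref{thm-uniqueness} is that there the endpoints need not be invertible, so the paper must first shift them off the kernel using (N) before appending $Y$-tails; you can skip that step because $\Omega(X,Y)$ already has invertible endpoints. Packaging the argument as $\nu=\mu-\sfl\equiv 0$ rather than $\mu=\sfl$ directly is only cosmetic.
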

\noindent
Let us finally mention that Ciriza, Fitzpatrick and Pejsachowicz obtained in \cite{JacoboUniqueness} a uniqueness theorem for the spectral flow on all of $\mathcal{BF}^\textup{sa}(H)$, where as in Theorem \ref{Lesch} only paths with invertible endpoints were considered. They used instead of the concatenation a direct sum axiom and showed that every path in $\mathcal{BF}^\textup{sa}(H)$ is homotopic to the direct sum of a path on a finite dimensional subspace of $H$ and a path having vanishing spectral flow. As normalisation they required that the spectral flow on a finite dimensional Hilbert space is given by the difference of the number of negative eigenvalues at the endpoints of a path. The reader can easily check that this is in accordance with \eqref{def-sfl}.

\subsection{The Uniqueness Theorem}
The aim of this section is to show the uniqueness of the spectral flow on the bigger set $\Omega(X)$ of all paths in $X$. 
In our uniqueness theorem, we will need the following \textit{normalisation property}. Let us recall that $0$ is either in the resolvent set of a selfadjoint Fredholm operator or it is an isolated eigenvalue of finite multiplicity. 

\begin{enumerate}
\item[(N)] Let $T\in X$ and set $\delta(T)=\frac{1}{2}\min\{|\lambda|:\, 0\neq \lambda\in\sigma(T)\}$. Consider the path 

\[\mathcal{A}^0_t=T+tI_H,\quad t\in[-\delta(T),\delta(T)].\]
Then

\[\mu(\mathcal{A}^0)=\dim\ker(T),\qquad \mu(\mathcal{A}^0\mid_{[0,\delta(T)]})=0.\]
\end{enumerate}
\noindent
Note that (N) holds for the spectral flow, which follows immediately from the definition \eqref{def-sfl}. The following theorem is our main result of this section.
 
\begin{theorem}\label{thm-uniqueness}
Let $\mu:\Omega(X)\rightarrow\mathbb{Z}$ be a map such that (C), (H), (Z) and (N) hold. Then

\[\mu(\mathcal{A})=\sfl(\mathcal{A}),\quad \mathcal{A}\in \Omega(X).\] 
\end{theorem}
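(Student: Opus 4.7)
The overall strategy is to reduce to Lesch's Uniqueness Theorem (Theorem \ref{Lesch}) for paths with invertible endpoints and then to use axiom (N) to bootstrap to all of $\Omega(X)$. Concretely, I plan to show that $\mu$ automatically satisfies (HI) and (NP) once (C), (H), (Z), and (N) are assumed; Theorem \ref{Lesch} will then yield $\mu=\sfl$ on $\Omega(X,Y)$, and a short concatenation argument using (N) extends the equality to arbitrary paths.

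To derive (HI) for $\mu$, suppose $h:I\times I\to X$ is a homotopy with $h(s,0),h(s,1)\in Y$ for all $s\in I$. The two concatenated paths $h(0,\cdot)\ast h(\cdot,1)$ and $h(\cdot,0)\ast h(1,\cdot)$ both run from $h(0,0)$ to $h(1,1)$, and the corresponding curves in $I\times I$ are the two L-shaped routes along the boundary of the square. These routes are homotopic with fixed endpoints through the interior of $I\times I$, and pushing this homotopy forward by $h$ yields a homotopy in $X$ with constant endpoints. Thus (H) forces $\mu(h(0,\cdot)\ast h(\cdot,1))=\mu(h(\cdot,0)\ast h(1,\cdot))$. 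Since $h(\cdot,0)$ and $h(\cdot,1)$ lie entirely in $Y$, (Z) annihilates their $\mu$-values, and (C) collapses the equality to $\mu(h(0,\cdot))=\mu(h(1,\cdot))$.

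To verify (NP), I would use the explicit example already given in the paper: $T_0=P_0+P_+-P_-$ with rank-one projection $P:=P_0$, so that $T:=(I-P)T_0(I-P)=P_+-P_-$ commutes with $P$, has $\ker T=\im P$ of dimension one, and satisfies $\sigma(T|_{\ker P})=\{-1,1\}$ and hence $\delta(T)=\tfrac{1}{2}$. Consider the homotopy $h(s,\lambda)=T+(\lambda-\tfrac{1}{2})\bigl((1-s)P+sI_H\bigr)$. Since $T$ and $P$ commute, a block decomposition on $\im(P)\oplus\ker(P)$ immediately shows that $h(s,0)$ and $h(s,1)$ are invertible for every $s\in I$. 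The (HI) derived above then gives $\mu(\mathcal{A}^{NP})=\mu\bigl(\{T+(\lambda-\tfrac{1}{2})I_H\}_{\lambda\in I}\bigr)$, and after the reparametrization $t=\lambda-\tfrac{1}{2}$ the path on the right is exactly the (N) path on $[-\delta(T),\delta(T)]=[-\tfrac{1}{2},\tfrac{1}{2}]$, so (N) yields $\mu(\mathcal{A}^{NP})=\dim\ker T=1$.

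Having verified (C), (HI), and (NP), Theorem \ref{Lesch} gives $\mu=\sfl$ on $\Omega(X,Y)$. For arbitrary $\mathcal{A}\in\Omega(X)$, set $\delta_i=\delta(\mathcal{A}_i)$ and form the prefix $\mathcal{B}^-:t\mapsto\mathcal{A}_0+(t-1)\delta_0 I_H$ and suffix $\mathcal{B}^+:t\mapsto\mathcal{A}_1+t\delta_1 I_H$ on $I$. The concatenation $\mathcal{B}^-\ast\mathcal{A}\ast\mathcal{B}^+$ has invertible endpoints, so $\mu=\sfl$ on it. Both $\mu$ and $\sfl$ satisfy (N) (for $\sfl$ this is immediate from \eqref{def-sfl}), so $\mu(\mathcal{B}^-)=\sfl(\mathcal{B}^-)=\dim\ker\mathcal{A}_0$ and $\mu(\mathcal{B}^+)=\sfl(\mathcal{B}^+)=0$, and two applications of (C) deliver $\mu(\mathcal{A})=\sfl(\mathcal{A})$. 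The main obstacle in the plan is the verification of (NP): one must construct a homotopy through $\mathcal{CF}^\textup{sa}(H)$ whose endpoint paths stay in $Y$, and the explicit commuting-projection example is exactly what keeps the invertibility check transparent.
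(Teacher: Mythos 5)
Your proof is correct, and it takes a genuinely different route from the paper's. The paper proves the theorem directly: it fixes $T_0$ as the initial point of the (N)-path of an operator with one-dimensional kernel, uses the fact that $\pi_1(X,T_0)$ is infinitely cyclic, exhibits (via the (N)-path concatenated with a path in $Y$, which exists by Theorem~\ref{thm-gap-pathconnected}) a loop on which both $\mu$ and $\sfl$ take the value $1$, concludes that $\mu$ and $\sfl$ agree as homomorphisms $\pi_1(X,T_0)\to\mathbb{Z}$, and finally reduces an arbitrary path to such a loop by attaching (N)-paths at the two endpoints and closing up in $Y$. Your proof instead reduces to Lesch's Theorem~\ref{Lesch}: you first derive (HI) from (C), (H), (Z) --- this is precisely one direction of the lemma that the paper places \emph{after} Theorem~\ref{thm-uniqueness} --- then verify (NP) by a commuting-projection homotopy with invertible endpoint paths connecting $\mathcal{A}^{NP}$ to the (N)-path (the computations $\delta(T)=\tfrac12$, $\dim\ker T=1$, and the block-diagonal invertibility check are all correct, and the essential spectrum stays split so $h$ does land in $\mathcal{BF}^{\mathrm{sa}}_\ast(H)$ in that case), invoke Theorem~\ref{Lesch} on $\Omega(X,Y)$, and extend to all of $\Omega(X)$ by prepending and appending the (N)-paths $\mathcal{B}^\pm$. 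Both routes are sound. The paper's argument is self-contained and makes transparent that the spectral flow is determined by its value on the generator of $\pi_1(X)$ together with the (N)-normalization; your argument modularly factors through Lesch's theorem and makes explicit exactly which extra axioms suffice to lift the invertibility assumption on the endpoints. One small presentational point: when you identify $h(1,\cdot)$ with the (N)-path you implicitly use invariance of $\mu$ under affine reparametrization, which (while harmless and used freely in the paper as well) is worth noting since it follows from (C), or alternatively from (H) applied to the straight-line reparametrization homotopy.
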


\begin{proof}
Let $T\in X$ be an operator having a one-dimensional kernel and let $\mathcal{A}^0$ be the corresponding path in (N). To simplify notation, we set $T_0:=\mathcal{A}^0_{-\delta(T)}=T-\delta(T)I_H$.\\
By (C) and (H) it is clear that $\mu$ and $\sfl$ induce homomorphisms

\begin{align}\label{fundiso}
\mu,\sfl:\pi_1(X,T_0)\rightarrow\mathbb{Z}.
\end{align}
Let us recall from the previous section that the fundamental group $\pi_1(X,T_0)$ is infinitely cyclic. By Theorem \ref{thm-gap-pathconnected}, there is a path $\mathcal{A}^1$ in $Y$ connecting $\mathcal{A}^0_{\delta(T)}$ to $T_0$. Now the concatenation $\mathcal{A}^0\ast\mathcal{A}^1$ is an element in $\pi_1(X,T_0)$ and we obtain from (C) and (Z)

\begin{align*}
\mu(\mathcal{A}^0\ast\mathcal{A}^1)&=\mu(\mathcal{A}^0)+\mu(\mathcal{A}^1)=\mu(\mathcal{A}^0)=\dim\ker(T)\\
&=\sfl(\mathcal{A}^0)+\sfl(\mathcal{A}^1)=\sfl(\mathcal{A}^0\ast\mathcal{A}^1).
\end{align*} 
As $\dim\ker(T)=1$, this firstly shows that $\mathcal{A}^0\ast\mathcal{A}^1$ is a generator of the infinitely cyclic group $\pi_1(X,T_0)$, and secondly that $\mu$ and $\sfl$ have the same value on it. Hence the maps in \eqref{fundiso} coincide.\\
Let now $\mathcal{A}\in\Omega(X)$ be an arbitrary path in $X$. Let us first consider the endpoints $\mathcal{A}_0$ and $\mathcal{A}_1$. We set 

\[\mathcal{B}_t=\mathcal{A}_0+tI_H,\, t\in[-\delta(\mathcal{A}_0),0],\qquad \mathcal{C}_t=\mathcal{A}_1+tI_H,\, t\in[0,\delta(\mathcal{A}_1)].\]
It follows from (N) that $\sfl(\mathcal{C})=\mu(\mathcal{C})=0$, and by using (N) and (C), that $\sfl(\mathcal{B})=\mu(\mathcal{B})=\dim\ker(\mathcal{A}_0)$.\\
Let now $\mathcal{A}^1$ be a path in $Y$ connecting $T_0$ to the initial point of $\mathcal{B}$ and let $\mathcal{A}^2$ be another path of this type connecting the endpoint of $\mathcal{C}$ to $T_0$. It follows from the first part of our proof, (C) and (Z) that

\begin{align*}
\mu(\mathcal{A})&=\mu(\mathcal{B}\ast\mathcal{A}\ast\mathcal{C})-\dim\ker(\mathcal{A}_0)=\mu(\mathcal{A}^1\ast\mathcal{B}\ast\mathcal{A}\ast\mathcal{C}\ast\mathcal{A}^2)-\dim\ker(\mathcal{A}_0)\\
&=\sfl(\mathcal{A}^1\ast\mathcal{B}\ast\mathcal{A}\ast\mathcal{C}\ast\mathcal{A}^2)-\dim\ker(\mathcal{A}_0)=\sfl(\mathcal{B}\ast\mathcal{A}\ast\mathcal{C})-\dim\ker(\mathcal{A}_0)\\
&=\sfl(\mathcal{A}),
\end{align*}
and so the claim is shown.
\end{proof}
\noindent
The reader may have noticed that, apart from a different normalisation property, we have replaced (HI) in Lesch's Theorem by (H) and (Z). The following lemma shows that the normalisation property actually is the only difference in the assumptions of the theorems. 

\begin{lemma}
If $\mu:\Omega(X)\rightarrow\mathbb{Z}$ is a map such that (C) and (N) hold, then (HI) is satisfied if and only if (H) and (Z) hold. 
\end{lemma}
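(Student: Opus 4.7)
The plan is to prove both implications by constructing explicit auxiliary homotopies that convert between the constant-endpoint hypothesis of (H) and the invertible-endpoint hypothesis of (HI). A preliminary observation used throughout: property (C) alone forces $\mu$ to vanish on any constant path $c$, since literally $c\ast c = c$ (the same map, no reparameterisation needed), so (C) gives $\mu(c) = 2\mu(c)$ and hence $\mu(c) = 0$.

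For the direction $(\Rightarrow)$, I first extract (Z) by homotoping a path $\mathcal{A}$ in $Y$ to the constant path at $\mathcal{A}_0$ via $h(s,\lambda) = \mathcal{A}_{(1-s)\lambda}$: both endpoints $h(s,0) = \mathcal{A}_0$ and $h(s,1) = \mathcal{A}_{1-s}$ remain in $Y$, so (HI) delivers $\mu(\mathcal{A}) = 0$. For (H), given a homotopy $h$ with constant endpoints $T_0$, $T_1$, I extend it by the $s$-independent affine paths $L(\lambda) = T_0 + (\lambda-1)\delta(T_0)I_H$ and $R(\lambda) = T_1 + \lambda\delta(T_1)I_H$ on $[0,1]$; both stay in $X$ and their outer endpoints $T_0 - \delta(T_0)I_H$ and $T_1 + \delta(T_1)I_H$ lie in $Y$. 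Laying $L$ on $[0,1/4]$, $h$ on $[1/4,1/2]$, and $R$ on $[1/2,1]$ (matching the quarter-interval layout of the iterated binary concatenation), the path $H(j,\cdot)$ is literally $(L\ast h(j,\cdot))\ast R$, so two applications of (C) give $\mu(H(j,\cdot)) = \mu(L)+\mu(h(j,\cdot))+\mu(R)$, while (HI) forces $\mu(H(0,\cdot)) = \mu(H(1,\cdot))$; the $s$-independent contributions $\mu(L)$ and $\mu(R)$ cancel, yielding (H).

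For the direction $(\Leftarrow)$, write $\alpha(s) = h(s,0)$ and $\beta(s) = h(s,1)$, both paths in $Y$ by hypothesis. I construct an auxiliary homotopy $H$ by prepending an initial arc of $\alpha$ and appending a reversed arc of $\beta$ to $h(s,\cdot)$: explicitly, $H(s,\lambda) = h(4s\lambda, 0)$ for $\lambda \in [0,1/4]$, $H(s,\lambda) = h(s, 4\lambda-1)$ for $\lambda \in [1/4,1/2]$, and $H(s,\lambda) = h(2s(1-\lambda), 1)$ for $\lambda \in [1/2,1]$. Continuity at the gluing points is immediate, and $H(s,0) = h(0,0)$, $H(s,1) = h(0,1)$ are both constant in $s$, so (H) gives $\mu(H(0,\cdot)) = \mu(H(1,\cdot))$. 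Two applications of (C) identify $H(0,\cdot)$ with the standard iterated concatenation of a constant path, $h(0,\cdot)$, and another constant path, yielding $\mu(H(0,\cdot)) = \mu(h(0,\cdot))$; while $H(1,\cdot) = (\alpha\ast h(1,\cdot))\ast\beta'$ gives $\mu(H(1,\cdot)) = \mu(\alpha) + \mu(h(1,\cdot)) + \mu(\beta')$, and (Z) applied to the paths $\alpha, \beta'$ in $Y$ kills the flanking terms, leaving $\mu(h(1,\cdot))$.

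The main obstacle will be parameterisation bookkeeping: iterated binary concatenation $\ast$ naturally uses the quarter-intervals $[0,1/4], [1/4,1/2], [1/2,1]$ rather than the aesthetically symmetric thirds, so the auxiliary homotopies $H$ must be laid out on exactly these quarter-intervals to apply (C) literally, without silently invoking a reparameterisation-invariance lemma. Apart from this, the only routine verification is that $T + tI_H$ stays in $X$ for $|t|\leq\delta(T)$ in each of the three cases of $X$ considered, which holds because the spectrum merely shifts by $t$ and the essential-spectrum sign condition is preserved by such small shifts.
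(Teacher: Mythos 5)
Your proof is correct, and the backward direction $\bigl((\mathrm{H})\wedge(\mathrm{Z})\Rightarrow(\mathrm{HI})\bigr)$ takes a genuinely different route from the paper's. The paper first derives a reversal lemma $\mu(\mathcal{A}')=-\mu(\mathcal{A})$ (by noting $\mathcal{A}\ast\mathcal{A}'$ is null-homotopic rel endpoints and invoking (H) and (C)), and then applies (H), (C) and (Z) to the boundary loop $h(0,\cdot)\ast h(\cdot,1)\ast h(1,\cdot)'\ast h(\cdot,0)'$, which is again asserted to be null-homotopic rel endpoints. You instead build an explicit three-piece homotopy $H$ on the quarter-interval layout which connects $(c_0\ast h(0,\cdot))\ast c_1$ at $s=0$ to $(\alpha\ast h(1,\cdot))\ast\beta'$ at $s=1$ with $s$-independent outer endpoints $h(0,0)$ and $h(0,1)$; (H) then identifies their $\mu$-values, (C) splits each side into three summands, and (Z) kills $\mu(\alpha)$ and $\mu(\beta')$ directly (so you never need the reversal lemma). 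This is more self-contained --- you do not invoke the standard fact that the boundary of a square is null-homotopic in the square --- at the cost of the reparameterisation bookkeeping you flag. Your forward direction is essentially the paper's argument; you make the homotopy $h(s,\lambda)=\mathcal{A}_{(1-s)\lambda}$ for (Z) explicit (the paper leaves it implicit), and for (H) you observe that the flanking $s$-independent terms $\mu(L),\mu(R)$ cancel without needing the numerical normalisation from (N), whereas the paper substitutes the specific values $\dim\ker h(s,0)$ and $0$. One small stylistic point: the preliminary observation that constant paths have vanishing $\mu$ is the same one the paper uses, though they phrase it as $\mu(\mathcal{A})=\mu(\mathcal{A}\ast\mathcal{A})$; literal idempotence $c\ast c=c$ holds only for constant paths, which is exactly the case needed.
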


\begin{proof}
We below use without further reference that if $\mathcal{A}\in\Omega(X)$ is a constant path, then by (C)

\[\mu(\mathcal{A})=\mu(\mathcal{A}\ast\mathcal{A})=\mu(\mathcal{A})+\mu(\mathcal{A}),\]
showing that $\mu(\mathcal{A})=0$.\\
Let us first assume that (HI) is satisfied and let $\mathcal{A}$ be a path of invertible operators. Then $\mu(\mathcal{A})=\mu(\widetilde{A})$, where $\widetilde{A}_t=\mathcal{A}_0$ for all $t\in I$. Hence $\mu(\mathcal{A})=0$ and (Z) is satisfied. Let now $h:I\times I\rightarrow X$ be a homotopy such that $h(s,0)$ and $h(s,1)$ are constant. By (N), we can concatenate $h(s,0)$ with a path $\mathcal{A}$ and $h(s,1)$ with a path $\mathcal{B}$ such that $\mathcal{A}\ast h(s,\cdot)\ast\mathcal{B}$ has invertible endpoints for all $s$ and 

\[\mu(\mathcal{A})=\dim\ker h(s,0),\qquad \mu(\mathcal{B})=0.\]
Hence by (HI) and (C)

\[\mu(h(0,\cdot))=\mu(\mathcal{A}\ast h(0,\cdot)\ast\mathcal{B})-\dim\ker(h(s,0))=\mu(\mathcal{A}\ast h(1,\cdot)\ast\mathcal{B})-\dim\ker(h(s,0))=\mu(h(1,\cdot)),\]
which shows (H).\\
For the converse, let us first note that $\mu(\mathcal{A})=-\mu(\mathcal{A}')$, where $\mathcal{A}'$ denotes the reverse path. Indeed, this follows as $\mathcal{A}\ast\mathcal{A}'$ is homotopic to a constant path by a homotopy with fixed endpoints. Then (C) implies that $0=\mu(\mathcal{A}\ast\mathcal{A}')=\mu(\mathcal{A})+\mu(\mathcal{A}')$. Let now $h:I\times I\rightarrow X$ be a homotopy such that $h(s,0), h(s,1)\in Y$ for all $s\in I$. As $h(0,\cdot)\ast h(\cdot,1)\ast h(1,\cdot)'\ast h(\cdot,0)'$ is homotopic to a constant path by a homotopy with fixed endpoints, we obtain from (H), (C) and (Z)

\begin{align*}
0&=\mu(h(0,\cdot)\ast h(\cdot,1)\ast h(1,\cdot)'\ast h(\cdot,0)')=\mu(h(0,\cdot))+\mu(h(\cdot,1))+\mu(h(1,\cdot)')+\mu(h(\cdot,0)')\\
&=\mu(h(0,\cdot))+\mu(h(1,\cdot)')=\mu(h(0,\cdot))-\mu(h(1,\cdot))
\end{align*}   
which shows (HI).
\end{proof}
\noindent


\subsection{Application: Spectral Flow and Maslov Index}


\subsubsection{The Maslov Index in Symplectic Hilbert Spaces}
We begin this section by giving a brief introduction into symplectic Hilbert spaces, where we refer for further details to Furutani's review \cite{Furutani}. \\
In this section we let $E$ be a real separable Hilbert space with scalar product $\langle\cdot,\cdot\rangle_E$, and we assume that there is a bounded linear operator $J:E\rightarrow E$ such that $J^2=-I_E$ and $J^\ast=-J$, where $J^\ast$ denotes the adjoint of $J$. We call the pair $(E,J)$ a symplectic Hilbert space and we set $\omega_0(u,v)=\langle Ju,v\rangle_E$, $u,v\in E$, which is a non-degenerate skew-symmetric bounded bilinear form. A subspace $L\subset E$ is called \textit{Lagrangian} if $L^\perp=JL$, where $L^\perp$ denotes the orthogonal complement of $L$ with respect to the scalar product on $E$. The set $\Lambda(E)$ of all Lagrangian subspaces in $E$ is a smooth Banach manifold. Note that every Lagrangian subspace is closed and so there is a unique orthogonal projection $P_L:E\rightarrow E$ onto $L$. The topology of $E$ is also induced by the metric

\[d(L_1,L_2)=\|P_{L_1}-P_{L_2}\|_{\mathcal{L}(H\times H)},\quad L_1,L_2\in\Lambda(E).\]
Let us recall that the idea of the Maslov index for paths of Lagrangian subspaces of $\mathbb{R}^{2n}$ is to count the dimensions of their intersections. Note that this obviously cannot be done in the above setting as here the intersection of two Lagrangian subspaces can be of infinite dimension. Two closed subspaces $L_1,L_2\subset E$ are called a \textit{Fredholm pair} if

\[\dim(L_1\cap L_2)<\infty\,\, \text{ and }\, \codim(L_1+L_2)<\infty.\]
It is often required in addition that $L_1+L_2$ is closed which, however, is redundant as explained, e.g., in \cite{BoossFurutani}. We now fix a Lagrangian subspace $\Lambda_0\in\Lambda(E)$ and set   

\[\mathcal{FL}_{L_0}(E)=\{L\in\Lambda(E):\,(L,L_0)\,\,\text{Fredholm}\}.\]
It was shown by Booss-Bavnbek and Furutani in \cite{BoossFurutaniI} that there is a Maslov index $\mu_{Mas}(\Lambda,L_0)$ for paths $\Lambda=\{\Lambda(\lambda)\}_{\lambda\in I}$ in $\mathcal{FL}_{L_0}(E)$ which has the interpretation that it counts the dimensions of intersections of $\Lambda(\lambda)$ and $L_0$ whilst $\lambda$ travels alomg the interval $I$. Moreover, the Maslov index has the following properties (see \cite{Furutani}):

\begin{enumerate}
\item[(i)] If $\Lambda(\lambda)\cap L_0=\{0\}$ for all $\lambda\in I$, then $\mu_{Mas}(\Lambda,L_0)=0$.
\item[(ii)] The Maslov index is additive under the concatenation of paths, i.e.
\[\mu_{Mas}(\Lambda_1\ast\Lambda_2,L_0)=\mu_{Mas}(\Lambda_1,L_0)+\mu_{Mas}(\Lambda_2,L_0)\]
if $\Lambda_1,\Lambda_2:I\rightarrow\mathcal{FL}_{L_0}(E)$ are two paths such that $\Lambda_1(1)=\Lambda_2(0)$.
\item[(iii)] If $\Lambda:I\times I\rightarrow\mathcal{FL}_{L_0}(E)$ is a homotopy such that $\Lambda(s,0)$ and $\Lambda(s,1)$ are constant for all $s\in I$, then
\[\mu_{Mas}(\Lambda(0,\cdot),L_0)=\mu_{Mas}(\Lambda(1,\cdot),L_0).\]
\end{enumerate}
Finally, let us recall from \cite[\S 3.4]{Furutani} that the Maslov index for differentiable paths $\Lambda=\{\Lambda(\lambda)\}_{\lambda\in I}$ in $\mathcal{FL}_{L_0}(E)$ can easily be computed in cases when there is only one single parameter value $\lambda_0$ for which $\Lambda(\lambda_0)\cap L_0\neq\{0\}$. Let $L\in\Lambda(E)$ be such that $L\cap\Lambda(\lambda_0)=\{0\}$. Then $\Lambda(\lambda)\cap L=\{0\}$ for all $\lambda$ which are sufficiently close to $\lambda_0$ and there is a differentiable path of bounded linear operators $\phi_\lambda:\Lambda(\lambda_0)\rightarrow L$ such that 

\[\Lambda(\lambda)=\{u+\phi_\lambda(u):\,u\in\Lambda(\lambda_0)\}.\]
The Maslov index of $\Lambda$ is given by the signature of

\begin{align}\label{crossing}
\mathcal{Q}:\Lambda(\lambda_0)\cap L_0\rightarrow\mathbb{R},\quad \mathcal{Q}[u]=\frac{d}{d\lambda}\mid_{\lambda=\lambda_0}\omega(u,\phi_\lambda(u))
\end{align}
if this quadratic form is non-degenerate. Moreover, if \eqref{crossing} is positive definite, then

\begin{align}\label{Maslovsplit}
\mu_{Mas}(\Lambda\mid_{[0,\lambda_0]},L_0)=\sgn \mathcal{Q}=\dim(\Lambda(\lambda_0)\cap L_0),\qquad \mu_{Mas}(\Lambda\mid_{[\lambda_0,1]},L_0)=0
\end{align}

\subsubsection{Spectral Flow via the Maslov Index}
Let now $H$ be a complex separable Hilbert space with scalar product $\langle\cdot,\cdot\rangle$. The realification $H_\mathbb{R}$ of $H$ is a real Hilbert space with scalar product $\langle\cdot,\cdot\rangle_\mathbb{R}=\operatorname{Re}\langle\cdot,\cdot\rangle$.  We set $E=H_\mathbb{R}\times H_{\mathbb{R}}$ which is a symplectic Hilbert space with respect to 

\[J=\begin{pmatrix}
0&-I_{H_{\mathbb{R}}}\\
I_{H_{\mathbb{R}}}&0
\end{pmatrix}\]
Note that the norms of $H$ and $H_\mathbb{R}$ coincide, and so the topologies of $E$ and $H\times H$ are the same.

\begin{lemma}
If $T\in\mathcal{C}^\textup{sa}(H)$, then $\gra(T)\in\Lambda(E)$. 
\end{lemma}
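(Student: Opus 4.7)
The strategy is to reduce the Lagrangian condition $\gra(T)^\perp = J\gra(T)$ (with orthogonal complement taken in the \emph{real} Hilbert space $E = H_\mathbb{R} \times H_\mathbb{R}$) to the classical functional-analytic identity characterising selfadjointness via graphs. First I would compute
\[
J\gra(T) = \{(-Tu, u) : u \in \mathcal{D}(T)\},
\]
and note that $\gra(T)$ is closed in $E$ since $T \in \mathcal{C}^\textup{sa}(H)$.

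Next I would observe that $\gra(T)$ is a \emph{complex} linear subspace of $H \times H$ because $T$ is complex linear, and argue that for any complex linear subspace $M \subset H \times H$ the real orthogonal complement (with respect to $\langle\cdot,\cdot\rangle_\mathbb{R} = \operatorname{Re}\langle\cdot,\cdot\rangle$) coincides with the complex orthogonal complement. Indeed, if $(v,w) \perp_\mathbb{R} M$ then testing against both $u$ and $iu$ in $M$ gives
\[
\operatorname{Re}\langle (v,w), u\rangle = 0 \qquad\text{and}\qquad \operatorname{Im}\langle (v,w), u\rangle = -\operatorname{Re}\langle (v,w), iu\rangle = 0,
\]
so $\langle (v,w), u\rangle = 0$ in the complex sense; the reverse inclusion is immediate. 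Hence it suffices to show $\gra(T)^{\perp_\mathbb{C}} = J\gra(T)$.

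For this identification I would use the very definition of the adjoint. A pair $(v,w) \in H \times H$ lies in $\gra(T)^{\perp_\mathbb{C}}$ iff
\[
\langle v, u\rangle + \langle w, Tu\rangle = 0 \qquad \text{for all } u \in \mathcal{D}(T),
\]
which is exactly the statement that $w \in \mathcal{D}(T^\ast)$ with $T^\ast w = -v$. Since $T = T^\ast$, this is equivalent to $w \in \mathcal{D}(T)$ and $v = -Tw$, i.e.\ $(v,w) \in J\gra(T)$. Combining the two steps yields $\gra(T)^\perp = J\gra(T)$, so $\gra(T) \in \Lambda(E)$.

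The only subtlety will be bookkeeping between the complex and real inner products; the rest is essentially a reformulation of von Neumann's graph characterisation of selfadjoint operators, so no real analytic difficulty is expected.
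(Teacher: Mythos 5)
Your proof is correct and takes essentially the same route as the paper's: compute $J\gra(T)=\{(-Tu,u):u\in\mathcal{D}(T)\}$, pass from real to complex orthogonality by testing against $iu$, and then identify $\gra(T)^{\perp_\mathbb{C}}$ with $J\gra(T)$ via the definition of the adjoint and selfadjointness of $T$. The only cosmetic difference is that you state the ``real orthogonal complement of a complex subspace equals the complex one'' observation as a standalone fact and invoke the adjoint characterisation directly, whereas the paper carries out the same two steps inline (separating real and imaginary parts, then arguing boundedness of $u\mapsto\langle w,Tu\rangle$).
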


\begin{proof}
We have to show that $J\gra(T)=\gra(T)^\perp$, where the orthogonal complement is with respect to the scalar product of $E$. As

\[J\gra(T)=\{(-Tu,u)\in H\times H:\, u\in\mathcal{D}(T)\},\]
and 

\[\langle (-Tu,u),(v,Tv)\rangle_{H\times H}=-\langle Tu,v\rangle+\langle u,Tv\rangle=0,\quad u,v\in\mathcal{D}(T),\]
it follows that 

\[\langle (-Tu,u),(v,Tv)\rangle_E=-\operatorname{Re}(\langle Tu,v\rangle)+\operatorname{Re}(\langle u,Tv\rangle)=0,\quad u,v\in\mathcal{D}(T),\]
and so
$J\gra(T)\subset \gra(T)^\perp$. Conversely, if $(v,w)\in\gra(T)^\perp$, then

\begin{align*}
\operatorname{Re}(\langle v,u\rangle)+\operatorname{Re}(\langle w,Tu\rangle)=0,\quad u\in\mathcal{D}(T),
\end{align*}   
which also shows that

\begin{align*}
\operatorname{Im}(\langle v,u\rangle)+\operatorname{Im}(\langle w,Tu\rangle)=\operatorname{Re}(\langle v,iu\rangle)+\operatorname{Re}(\langle w,iTu\rangle)=0,\quad u\in\mathcal{D}(T),
\end{align*}
and so

\begin{align}\label{scalarprodJperp}
\langle v,u\rangle+\langle w,Tu\rangle=0,\quad u\in\mathcal{D}(T).
\end{align} 
Thus, $u\mapsto \langle w,Tu\rangle$ is bounded on $\mathcal{D}(T)$ and so $w\in\mathcal{D}(T^\ast)=\mathcal{D}(T)$. Hence, by \eqref{scalarprodJperp},

\[\langle v+Tw,u\rangle=0,\quad u\in\mathcal{D}(T),\]
which shows that $v=-Tw$. Consequently, $(v,w)=(-Tw,w)=J(w,Tw)\in J\gra(T)$. 
\end{proof}
\noindent
In what follows, we set $\Lambda_0=H_\mathbb{R}\times\{0\}$ which is an element of $\Lambda(E)$.

\begin{lemma}\label{CFimpFL}
If $T\in\mathcal{CF}^\textup{sa}(H)$, then $\gra(T)\in\mathcal{FL}_{\Lambda_0}(E)$.
\end{lemma}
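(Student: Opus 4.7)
The plan is to verify the two Fredholm-pair conditions directly from the Fredholm property of $T$, using the concrete description of $\gra(T)$ and $\Lambda_0 = H_\mathbb{R}\times\{0\}$.

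First I would compute the intersection. A pair $(u,Tu)\in\gra(T)$ lies in $\Lambda_0$ precisely when $Tu=0$, so
\[
\gra(T)\cap\Lambda_0 \;=\; \ker(T)\times\{0\}.
\]
Since $T\in\mathcal{CF}^\textup{sa}(H)$, the kernel $\ker(T)$ is finite dimensional, which handles the first Fredholm-pair condition.

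Next I would identify the sum. For $(v,w)\in H\times H$, writing $(v,w)=(u,Tu)+(x,0)$ with $u\in\mathcal{D}(T)$ and $x\in H$ forces $w=Tu$ and leaves $x=v-u$ free. Hence
\[
\gra(T)+\Lambda_0 \;=\; H\times \im(T).
\]
Because $T$ is a selfadjoint Fredholm operator, $\im(T)$ is closed and has finite codimension in $H$ (with $\codim \im(T)=\dim\ker(T)$), so $\gra(T)+\Lambda_0$ is closed of finite codimension in $E=H_\mathbb{R}\times H_\mathbb{R}$. Combined with the previous lemma giving $\gra(T)\in\Lambda(E)$, this yields $\gra(T)\in\mathcal{FL}_{\Lambda_0}(E)$.

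There is no real obstacle here: the only point requiring minor care is making sure that we are using Fredholmness in the appropriate (real) setting. Since the topologies of $H$ and $H_\mathbb{R}$ agree and dimensions/codimensions of complex subspaces double when viewed over $\mathbb{R}$ but remain finite, both conditions transfer without issue. The note that the sum is automatically closed (so we need not separately verify this, as remarked in the paragraph above the lemma citing \cite{BoossFurutani}) can be invoked, or else closedness can be read off directly from the closedness of $\im(T)$.
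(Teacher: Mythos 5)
Your proof is correct and follows essentially the same route as the paper: both identify $\gra(T)\cap\Lambda_0=\ker(T)\times\{0\}$ and then show the sum has finite codimension by appealing to the Fredholmness of $T$. The only cosmetic difference is that you compute $\gra(T)+\Lambda_0=H\times\im(T)$ explicitly, whereas the paper exhibits $\{0\}\times V$ (with $V$ a complement of $\im(T)$) as a finite-dimensional complement; these are two phrasings of the same argument.
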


\begin{proof}
As $\gra(T)\cap\Lambda_0=\ker(T)\times\{0\}$, we see that this intersection is of finite dimension as $T$ is Fredholm. Moreover, if $V\subset H$ is such that $\im(T)\oplus V=H$, then for any $v\in H$, there are $w\in\mathcal{D}(T)$ and $v_1\in V$ such that $v=Tw+v_1$. Consequently, if $(u,v)\in H\times H$, then

\[(u,v)=(u-w,0)+(w,Tw)+(0,v_1)\in ((H\times\{0\})+ \gra(T))\oplus (\{0\}\times V)\]
showing that $\dim V$ is the codimension of $\Lambda_0+ \gra(T)$, which is finite as $T$ is Fredholm. Hence $(\gra(T),\Lambda_0)$ is a Fredholm pair and so the assertion is shown by the previous lemma.   
\end{proof}
Note that if $\{\mathcal{A}_\lambda\}_{\lambda\in I}$ is a gap-continuous path in $\mathcal{CF}^\textup{sa}(H)$, then $\{\gra(\mathcal{A}_\lambda)\}_{\lambda\in I}$ is continuous in $\Lambda(E)$ by the definition of the metric on the latter space. Consequently, the Maslov index of $\{\gra(\mathcal{A}_\lambda)\}_{\lambda\in I}$ with respect to $\Lambda_0$ is well-defined by Lemma \ref{CFimpFL}. The following theorem is the main result of this section.

\begin{theorem}
For any path $\mathcal{A}=\{\mathcal{A}_\lambda\}_{\lambda\in I}$ in $\mathcal{CF}^\textup{sa}(H)$,

\[\sfl(\mathcal{A})=\mu_{Mas}(\{\gra(\mathcal{A}_\lambda)\}_{\lambda\in I},\Lambda_0).\] 
\end{theorem}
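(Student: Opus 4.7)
The plan is to define a map $\mu : \Omega(\mathcal{CF}^\textup{sa}(H)) \to \mathbb{Z}$ by
\[
\mu(\mathcal{A}) = \mu_{Mas}(\{\gra(\mathcal{A}_\lambda)\}_{\lambda \in I}, \Lambda_0)
\]
and then invoke the uniqueness theorem (Theorem \ref{thm-uniqueness}), which states that any integer-valued functional on $\Omega(\mathcal{CF}^\textup{sa}(H))$ satisfying (C), (H), (Z), and (N) must coincide with the spectral flow. By Lemma \ref{CFimpFL} and the observation immediately preceding the present theorem, $\mu$ is well-defined on all gap-continuous paths.

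Three of the four axioms are essentially cost-free. Axiom (C) is precisely property (ii) of the Maslov index recalled in the previous subsection. Axiom (H) follows from property (iii) applied to the homotopy $\{\gra(h(s, \lambda))\}_{(s,\lambda) \in I \times I}$, since constant endpoints for $h$ translate into constant endpoints for the graph homotopy. Axiom (Z) is immediate from property (i) together with the identity $\gra(\mathcal{A}_\lambda) \cap \Lambda_0 = \ker(\mathcal{A}_\lambda) \times \{0\}$ already used in the proof of Lemma \ref{CFimpFL}: if each $\mathcal{A}_\lambda$ is invertible, the intersection is trivial throughout.

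The substantive step is the normalisation (N) for the path $\mathcal{A}^0_t = T + t I_H$ with $t \in [-\delta(T), \delta(T)]$. By the choice of $\delta(T)$, the operator $T + t I_H$ is invertible for all $t \neq 0$, so $t = 0$ is the unique crossing and I can apply the differentiable formulas \eqref{crossing}--\eqref{Maslovsplit}. I would take the Lagrangian $L = \{0\} \times H_\mathbb{R}$, which satisfies $L \cap \gra(T) = \{0\}$, and write a general element of $\gra(T + t I_H)$ as $(u, Tu) + (0, tu)$; this gives the explicitly linear generating path $\phi_t(u, Tu) = (0, tu)$. A short calculation then shows
\[
\omega_0((u, Tu), \phi_t(u, Tu)) = \langle (-Tu, u), (0, tu) \rangle_E = t \|u\|^2,
\]
so the crossing form restricted to $\gra(T) \cap \Lambda_0 = \ker(T) \times \{0\}$ is $\mathcal{Q}[(u, 0)] = \|u\|^2$, which is positive definite. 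The formula \eqref{Maslovsplit} then yields $\mu(\mathcal{A}^0) = \dim \ker T$ and $\mu(\mathcal{A}^0|_{[0, \delta(T)]}) = 0$, which is exactly (N).

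The main delicate point I anticipate is the sign and orientation convention implicit in \eqref{crossing}--\eqref{Maslovsplit}, since different references assign crossing contributions to either the left or the right half-interval; one must check that the convention adopted in the previous subsection is consistent with the expected value $\dim \ker T$ appearing on the left of the interior crossing. Once this is pinned down, the computation above produces the correct normalisation, and Theorem \ref{thm-uniqueness} delivers $\mu = \sfl$.
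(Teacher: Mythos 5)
Your proposal matches the paper's own proof essentially step for step: you define $\mu$ as the Maslov index of the graph path with respect to $\Lambda_0$, derive (C), (H), (Z) directly from properties (i)--(iii) of the Maslov index and the identity $\gra(\mathcal{A}_\lambda)\cap\Lambda_0=\ker(\mathcal{A}_\lambda)\times\{0\}$, and verify (N) by computing the crossing form of $\{\gra(T+tI_H)\}$ at $t=0$ using the complementary Lagrangian $\{0\}\times H_\mathbb{R}$ and the generating path $\phi_t(u,Tu)=(0,tu)$, obtaining $\mathcal{Q}[(u,0)]=\|u\|^2$ exactly as in the paper. The sign/orientation concern you raise is resolved by \eqref{Maslovsplit}, which is precisely what the paper invokes.
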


\begin{proof}
We define a map 

\[\mu:\Omega(\mathcal{CF}^\textup{sa}(H))\rightarrow\mathbb{Z},\quad \{\mathcal{A}_\lambda\}_{\lambda\in I}\mapsto \mu_{Mas}(\{\gra(\mathcal{A}_\lambda)\}_{\lambda\in I},\Lambda_0)\]
and our aim is to show the properties (C), (H), (Z) and (N) in Theorem \ref{thm-uniqueness}. We note at first that (C) is an immediate consequence of (ii) from above. Moreover, (H) follows from (iii) and the fact that any gap-continuous homotopy of two paths in $\mathcal{CF}^\textup{sa}(H)$ induces a continuous homotopy in $\Lambda(E)$ by the definition of the metric on the latter space. To see (Z), we just need to note that the intersection $\gra(\mathcal{A}_\lambda)\cap(H\times\{0\})=\ker(\mathcal{A}_\lambda)\times\{0\}$ is isomorphic to the kernel of $\mathcal{A}_\lambda$. Hence  $\gra(\mathcal{A}_\lambda)\cap(H\times\{0\})=\{0\}$ if $\ker(\mathcal{A}_\lambda)=\{0\}$ and so (Z) holds by (i).\\
Finally, let us show (N). We set for $T\in\mathcal{CF}^\textup{sa}(H)$ as in (N) $\mathcal{A}^0_\lambda=T+\lambda I_H$ which is a path in $\mathcal{CF}^\textup{sa}(H)$. Moreover, we set $\Lambda(\lambda)=\gra(\mathcal{A}^0_\lambda)$ and note that $\{0\}\times H$ is transversal to $\Lambda(\lambda)$ for all $\lambda$. Set

\[\phi_\lambda:\gra(\mathcal{A}^0_0)=\gra(T)\rightarrow\{0\}\times H,\quad (u,Tu)\mapsto (0,\lambda u),\]
which is a path of bounded operators. Note that

\[\gra(\mathcal{A}^0_\lambda)=\{(u,Tu)+\phi_\lambda(u,Tu):\, u\in\mathcal{D}(T)\},\]
and so $\phi_\lambda$ can be used to compute the Maslov index of $(\gra(\mathcal{A}^0_\cdot),\Lambda_0)$ at $\lambda=0$. The crossing form is

\begin{align*}
\mathcal{Q}((u,Tu),(v,Tv))&=\frac{d}{d\lambda}\mid_{\lambda=0}\omega_0((u,Tu),\phi_\lambda(v,Tv))=\frac{d}{d\lambda}\mid_{\lambda=0}\langle(-Tu,u),(0,\lambda v)\rangle\\
&=\langle u,v\rangle,\quad u,v\in\mathcal{D}(T).
\end{align*} 
Hence the signature of the restriction of $\mathcal{Q}$ to 

\[\gra(T)\cap(H\times\{0\})=\{(u,0)\in\gra(T):\, u\in\ker(T)\}\]
is the dimension of $\ker(T)$, and we see from \eqref{Maslovsplit}

\begin{align*}
\mu(\mathcal{A}^0)=\dim\ker(T),\quad\mu(\mathcal{A}\mid_{[0,\delta(T)]})=0,
\end{align*} 
where we use that $\gra(\mathcal{A}_\lambda)\cap(H\times\{0\})=\{0\}$ for $0<|\lambda|<\delta(T)$. Hence all assumptions of Theorem \ref{thm-uniqueness} are shown and so $\mu(\mathcal{A})=\sfl(\mathcal{A})$ for all paths $\mathcal{A}$ in $\mathcal{CF}^\textup{sa}(H)$.
\end{proof}


\thebibliography{99}

 
\bibitem{AtiyahSinger} M.F. Atiyah, I.M. Singer, \textbf{Index Theory for skew--adjoint Fredholm operators}, Inst. Hautes Etudes Sci. Publ. Math. \textbf{37}, 1969, 5--26 
 
\bibitem{APS} M.F. Atiyah, V.K.  Patodi, I.M.  Singer, \textbf{Spectral asymmetry and Riemannian geometry III},
 Math. Proc. Cambridge Philos. Soc.  \textbf{79}, 1976, 71--99

\bibitem{BoossFurutaniI} B. Booss-Bavnbek, K. Furutani, \textbf{The Maslov index: a functional analytical definition and the spectral flow formula}, Tokyo J. Math. \textbf{21}, 1998, 1--34.
 
\bibitem{BoossFurutani} B. Booss-Bavnbek, K. Furutani, \textbf{Symplectic functional analysis and spectral invariants}, Geometric aspects of partial differential equations (Roskilde, 1998), 53--83, Contemp. Math., 242, Amer. Math. Soc., Providence, RI,  1999

\bibitem{UnbSpecFlow} B. Booss-Bavnbek, M. Lesch, J. Phillips, \textbf{Unbounded Fredholm operators and spectral flow}, Canad. J. Math. \textbf{57}, 2005, 225--250


\bibitem{Cappell} S.E. Cappell, R. Lee, E.Y. Miller, \textbf{On the Maslov index}, Comm. Pure Appl. Math. \textbf{47},  1994, 121--186

\bibitem{JacoboUniqueness} E. Ciriza, P.M. Fitzpatrick, J. Pejsachowicz, \textbf{Uniqueness of Spectral Flow}, Math. Comp. Mod. \textbf{32}, 2000, 1495--1501


\bibitem{SFLPejsachowicz}  P.M. Fitzpatrick, J. Pejsachowicz, L. Recht, \textbf{Spectral Flow and Bifurcation of Critical Points of Strongly-Indefinite Functionals-Part I: General Theory}, J. Funct. Anal. \textbf{162}, 1999, 52--95

\bibitem{Furutani} K. Furutani, \textbf{Fredholm-Lagrangian-Grassmannian and the Maslov index}, J. Geom. Phys. \textbf{51}, 2004, 269--331

\bibitem{Gohberg} I. Gohberg, S. Goldberg, M.A. Kaashoek, \textbf{Classes of linear operators}, Vol. I,
Operator Theory: Advances and Applications \textbf{49}, Birkh\'{a}user Verlag, Basel, 1990

\bibitem{MJN} M. Izydorek, J. Janczewska, N. Waterstraat, \textbf{The Maslov index and the spectral flow - revisited}, Fixed Point Theory Appl. \textbf{2019:5}, 2019

\bibitem{Joachim} M. Joachim, \textbf{Unbounded Fredholm Operators and K-Theory}, Highdimensional Manifold Topology, World Sci. Publishing, 2003, 177-199

\bibitem{Kato} T. Kato, \textbf{Perturbation theory for linear operators}, Reprint of the 1980 edition, Classics in Mathematics, Springer-Verlag, Berlin,  1995

\bibitem{Lesch} M. Lesch, \textbf{The uniqueness of the spectral flow on spaces of unbounded self-adjoint Fredholm operators}, Spectral geometry of manifolds with boundary and decomposition of manifolds, 193--224, Contemp. Math., 366, Amer. Math. Soc., Providence, RI,  2005

\bibitem{Nicolaescu} L. Nicolaescu, \textbf{On the space of Fredholm operators}, An. Stiint. Univ. Al. I. Cuza Iasi. Mat. (N.S.) \textbf{53},   2007, 209--227

\bibitem{Pejsachowicz} J. Pejsachowicz, N. Waterstraat, \textbf{Bifurcation of critical points for continuous families of $C^2$-functionals of Fredholm type}, J. Fixed Point Theory Appl. \textbf{13}, 2013, 537--560

\bibitem{Phillips} J. Phillips, \textbf{Self-adjoint Fredholm operators and spectral flow}, Canad. Math. Bull. \textbf{39}, 1996, 460--467 



\bibitem{Robbin} J. Robbin, D. Salamon, \textbf{The spectral flow and the Maslov index}, Bull. London Math. Soc.  \textbf{27}, 1995, 1--33



\bibitem{Wahl} C. Wahl, \textbf{A new topology on the space of unbounded selfadjoint operators, $K$-theory and spectral flow}, $C^*$-algebras and elliptic theory II, 297--309, Trends. Math., Birkh., Basel, 2008

\bibitem{WaterstraatHomoclinics} N. Waterstraat, \textbf{Spectral flow, crossing forms and homoclinics of Hamiltonian systems}, Proc. Lond. Math. Soc. (3) \textbf{111}, 2015, 275--304

\bibitem{Fredholm} N. Waterstraat, \textbf{Fredholm Operators and Spectral Flow}, Rend. Semin. Mat. Univ. Politec. Torino \textbf{75}, 2017, 7--51

\bibitem{HomoclinicsInf} N. Waterstraat, \textbf{On the Fredholm Lagrangian Grassmannian, Spectral Flow and ODEs in Hilbert Spaces}, arXiv:1803.01143

\vspace*{1.3cm}

\begin{minipage}{1.2\textwidth}
\begin{minipage}{0.4\textwidth}
Maciej Starostka\\
Ruhr-Universit{\"a}t Bochum\\
and\\
Gdansk University of Technology\\
maciejstarostka@gmail.com
\end{minipage}
\hfill
\begin{minipage}{0.6\textwidth}
Nils Waterstraat\\
Martin-Luther-Universit\"at Halle-Wittenberg\\
Naturwissenschaftliche Fakult\"at II\\
Institut f\"ur Mathematik\\
06099 Halle (Saale)\\
Germany\\
nils.waterstraat@mathematik.uni-halle.de

\end{minipage}
\end{minipage}

\end{document}